\newtheorem{theorem}{Theorem}[section]
\newtheorem{conjecture}[theorem]{Conjecture}
\newtheorem{corollary}[theorem] {Corollary}
\newtheorem{definition}[theorem]{Definition}
\newtheorem{proposition}[theorem]{Proposition}
\newtheorem{remark}[theorem]{Remark}
\newtheorem{question}[theorem]{Question}
\title{This is the title}
\begin{document}

	\begin{center}
		{\bf{MODULAR WELCH BOUNDS WITH APPLICATIONS}}\\
		\textbf{K. MAHESH KRISHNA}\\
Post Doctoral Fellow \\
Statistics and Mathematics Unit\\
		Indian Statistical Institute, Bangalore Centre\\
		Karnataka 560 059 India\\
		Email: kmaheshak@gmail.com \\
		\today
	\end{center}
	
	\hrule
	\vspace{0.5cm}

\textbf{Abstract}:  We prove the following two results. 
\begin{enumerate}
	\item Let $\mathcal{A}$ be a unital commutative C*-algebra and $\mathcal{A}^d$ be the standard Hilbert C*-module over $\mathcal{A}$.  Let $n\geq d$. If 	$\{\tau_j\}_{j=1}^n$ is any collection of  vectors in $\mathcal{A}^d$ such that $\langle \tau_j, \tau_j \rangle =1$, $\forall 1\leq j \leq n$, then 
		\begin{align*}
		\max _{1\leq j,k \leq n, j\neq k}\|\langle \tau_j, \tau_k\rangle ||^{2m}\geq \frac{1}{n-1}\left[\frac{n}{{d+m-1\choose m}}-1\right], \quad \forall m \in \mathbb{N}.
	\end{align*}
	\item  Let $\mathcal{A}$ be a $\sigma$-finite commutative W*-algebra or a commutative  AW*-algebra and $\mathcal{E}$ be a rank d Hilbert C*-module over $\mathcal{A}$. Let $n\geq d$. If 	$\{\tau_j\}_{j=1}^n$ is any collection of  vectors in $\mathcal{E}$ such that $\langle \tau_j, \tau_j \rangle =1$, $\forall 1\leq j \leq n$, then 
	\begin{align*}
		\max _{1\leq j,k \leq n, j\neq k}\|\langle \tau_j, \tau_k\rangle ||^{2m}\geq \frac{1}{n-1}\left[\frac{n}{{d+m-1\choose m}}-1\right], \quad \forall m \in \mathbb{N}.
	\end{align*}
\end{enumerate}
Results (1) and (2) reduce to the famous result of  Welch [\textit{IEEE Transactions on  Information Theory, 1974}] obtained 48 years ago.  We introduce the notions of modular frame potential, modular equiangular frames and modular  Grassmannian frames. We formulate Zauner's conjecture for Hilbert C*-modules.

\textbf{Keywords}: Welch bound, C*-algebra, W*-algebra, invariant basis number, Grassmannian frames, Zauner's conjecture.

\textbf{Mathematics Subject Classification (2020)}: 46L05, 46L08, 46L10, 42C15. \\

\hrule
\tableofcontents

\section{Introduction}
In the study of cross correlations of signals,  L. Welch proved the following result in 1974.
\begin{theorem}\cite{WELCH}\label{WELCHTHEOREM} (\textbf{Welch bounds})
Let $n\geq d$.	If	$\{\tau_j\}_{j=1}^n$  is any collection of  unit vectors in $\mathbb{C}^d$, then
	\begin{align*}
		\sum_{j=1}^n\sum_{k=1}^n|\langle \tau_j, \tau_k\rangle |^{2m}\geq \frac{n^2}{{d+m-1\choose m}}, \quad \forall m \in \mathbb{N}.
	\end{align*}
	In particular,
	
	\begin{align*}
		\sum_{j=1}^n\sum_{k=1}^n|\langle \tau_j, \tau_k\rangle |^{2}\geq \frac{n^2}{{d}}.
	\end{align*}
Further, 
	\begin{align}\label{FIRST123}
\text{(\textbf{Higher order Welch bounds})}	\quad		\max _{1\leq j,k \leq n, j\neq k}|\langle \tau_j, \tau_k\rangle |^{2m}\geq \frac{1}{n-1}\left[\frac{n}{{d+m-1\choose m}}-1\right], \quad \forall m \in \mathbb{N}.
	\end{align}
	In particular,
	\begin{align*}
	\text{(\textbf{First order Welch bound})}\quad 	\max _{1\leq j,k \leq n, j\neq k}|\langle \tau_j, \tau_k\rangle |^{2}\geq\frac{n-d}{d(n-1)}.
	\end{align*}
\end{theorem}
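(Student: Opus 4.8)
The plan is to reduce the $m$-th order bound to an $m=1$-type estimate carried out in a larger space, by means of the $m$-fold symmetric tensor power. First I would fix $m \in \mathbb{N}$ and consider the map $\tau \mapsto \tau^{\otimes m}$ sending each $\tau_j \in \mathbb{C}^d$ to its $m$-fold tensor power. The crucial observation is that every $\tau_j^{\otimes m}$ lies in the symmetric subspace $\mathrm{Sym}^m(\mathbb{C}^d) \subseteq (\mathbb{C}^d)^{\otimes m}$, whose dimension is exactly $\binom{d+m-1}{m}$, and that the inner product behaves multiplicatively:
\begin{align*}
\langle \tau_j^{\otimes m}, \tau_k^{\otimes m}\rangle = \langle \tau_j, \tau_k\rangle^m.
\end{align*}
In particular $|\langle \tau_j^{\otimes m}, \tau_k^{\otimes m}\rangle|^2 = |\langle \tau_j, \tau_k\rangle|^{2m}$ and each $\tau_j^{\otimes m}$ is again a unit vector.

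Next I would introduce the Gram matrix $G \in M_n(\mathbb{C})$ with entries $G_{jk} = \langle \tau_j^{\otimes m}, \tau_k^{\otimes m}\rangle$. This matrix is positive semidefinite and Hermitian, its diagonal entries are all $1$ so that $\operatorname{tr}(G)=n$, and --- because all the vectors $\tau_j^{\otimes m}$ lie in the space $\mathrm{Sym}^m(\mathbb{C}^d)$ of dimension $D := \binom{d+m-1}{m}$ --- its rank is at most $D$. Writing $\lambda_1,\dots,\lambda_n \geq 0$ for the eigenvalues of $G$, at most $D$ of which are nonzero, I would compute
\begin{align*}
\sum_{j=1}^n\sum_{k=1}^n |\langle \tau_j, \tau_k\rangle|^{2m} = \sum_{j,k}|G_{jk}|^2 = \operatorname{tr}(G^2) = \sum_{i=1}^n \lambda_i^2.
\end{align*}
The Cauchy--Schwarz inequality applied to the nonzero eigenvalues then gives $\left(\sum_i \lambda_i\right)^2 \leq D \sum_i \lambda_i^2$, i.e. $n^2 = (\operatorname{tr} G)^2 \leq D\operatorname{tr}(G^2)$, which rearranges to the claimed sum bound $\sum_{j,k}|\langle \tau_j,\tau_k\rangle|^{2m} \geq n^2/D$. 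Taking $m=1$ recovers the stated special case with $D=d$.

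Finally, to pass from the sum bound to the higher order (maximum) bound of \eqref{FIRST123}, I would split off the diagonal. Since each $\langle \tau_j, \tau_j\rangle = 1$ contributes $1$ to the double sum, the off-diagonal terms satisfy
\begin{align*}
\frac{n^2}{D} - n \leq \sum_{j \neq k}|\langle \tau_j, \tau_k\rangle|^{2m} \leq n(n-1)\max_{1\leq j,k\leq n,\, j\neq k}|\langle \tau_j, \tau_k\rangle|^{2m},
\end{align*}
using that there are exactly $n(n-1)$ ordered off-diagonal pairs. Dividing by $n(n-1)$ yields the claimed bound, and setting $m=1$, $D=d$ gives the first order bound $(n-d)/(d(n-1))$.

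I expect the main obstacle to be the tensor-power bookkeeping rather than the estimation itself: one must verify carefully that the symmetric power is the correct ambient space (the full tensor power $(\mathbb{C}^d)^{\otimes m}$ would only yield the weaker denominator $d^m$), confirm that $\dim \mathrm{Sym}^m(\mathbb{C}^d) = \binom{d+m-1}{m}$, and justify that $\tau_j^{\otimes m}$ genuinely lands in the symmetric subspace so that the rank bound by $D$ is valid. Once these structural facts are in place, the inequality is a two-line consequence of Cauchy--Schwarz.
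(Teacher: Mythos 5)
Your argument is correct and is essentially the route the paper takes: the paper derives Theorem \ref{WELCHTHEOREM} as a corollary of its modular generalizations, and the proof of Theorem \ref{ARBITMODULE} is exactly your symmetric-tensor-power argument --- embed $\tau_j\mapsto\tau_j^{\otimes m}$ into $\mathrm{Sym}^m$ of dimension $\binom{d+m-1}{m}$, compare $(\mathrm{tr})^2$ with $\mathrm{tr}$ of the square via Cauchy--Schwarz on the eigenvalues, then strip off the diagonal. The only cosmetic difference is that you work with the $n\times n$ Gram matrix where the paper uses the frame operator $S_\tau$ on $\mathrm{Sym}^m$; these have the same nonzero spectrum, so the two computations coincide.
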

Proof of Theorem  \ref{WELCHTHEOREM} (mainly the first order Welch bound) can be found at various places, namely \cite{MASSEYON, DATTAHOWARD, MASSEYMITTELHOLZER, ROSENFELD, SUSTIKTROPP, WALDONBOOK, WALDRON2003, WALDRONSH}. It also appears implicitly in the celebrated paper \cite{CONWAYHARDINSLOANE} (see \cite{CALDERBANK, FICKUSJASPERMIXON}). After about half century from the  work of Welch, now Welch bounds are used in various places such as root-mean-square (RMS) absolute cross relation of unit vectors \cite{SARWATEBOOK}, frame potential \cite{BENEDETTOFICKUS, CASAZZAFICKUSOTHERS, BODMANNHAASPOTENTIAL},  correlations \cite{SARWATE},  codebooks \cite{DINGFENG}, numerical search algorithms  \cite{XIA, XIACORRECTION}, quantum measurements 
\cite{SCOTTTIGHT}, coding and communications \cite{TROPPDHILLON, STROHMERHEATH}, code division multiple access (CDMA) systems \cite{CHEBIRA1, CHEBIRA2}, wireless systems \cite{YATES}, compressed sensing \cite{TAN}, `game of Sloanes' \cite{JASPERKINGMIXON}, equiangular tight frames \cite{SUSTIKTROPP},  etc.

Over the time, Theorem \ref{WELCHTHEOREM} has been generalized for dual frames, vectors which may not be normalized, continuous frames indexed by complex projective space,  continuous Bessel sequences indexed by arbitrary measure spaces, natural number $m$ replaced by arbitrary reals \cite{CHRISTENSENDATTAKIM, DATTAWELCHLMA, WALDRONSH, HAIKINZAMIRGAVISH, EHLEROKOUDJOU, WALDONBOOK, MAHESHKRISHNA, WALDRON2003, DATTAHOWARD}. But to the best of our knowledge, Theorem  \ref{WELCHTHEOREM}  is not known for Hilbert C*-modules. Hilbert C*-modules are first introduced by Kaplansky \cite{KAPLANSKY2} for commutative C*-algebras and later developed for modules over arbitrary C*-algebras by Paschke  \cite{PASCHKE} and Rieffel \cite{RIEFFEL}. Revolutionary work of Kasparov made the theory as a powerful tool in K-theory \cite{KASPAROV, BLACKADAR, JENSENTHOMSEN}. We refer \cite{LANCE, MANUILOVTROITSKY, WEGGEOLSEN, MINGOPHILLIPS, FRANKPO, DUPREFILLMORE} for further information on Hilbert C*-modules. We remark here that the action of C*-algebra is from the left and the inner product is linear in first variable and *-linear in second variable.

\section{Modular Welch bounds}
We first derive Welch bound for standard Hilbert C*-modules over unital commutative C*-algebras. First we need a definition.
\begin{definition}
	A collection $\{\tau_j\}_{j=1}^n$ in a Hilbert C*-module $\mathcal{E}$  over unital C*-algebra $\mathcal{A}$ with identity $1$ is said to have \textbf{unit inner product} if
	\begin{align*}
		\langle \tau_j, \tau_j \rangle =1, \quad \forall 1\leq j \leq n.
	\end{align*}
\end{definition}
It is clear that unit inner product vectors have unit norm but not the converse.
\begin{theorem}\textbf{(Modular Welch bounds)}\label{STANDARDMODULE}
 Let $\mathcal{A}$ be a unital commutative C*-algebra and $\mathcal{A}^d$ be the standard Hilbert C*-module over  $\mathcal{A}$.  Let $n\geq d$. If 	$\{\tau_j\}_{j=1}^n$ is any collection of unit inner product  vectors in $\mathcal{A}^d$, then 
 	\begin{align*}
  \sum_{j=1}^n\sum_{k=1}^n\|\langle \tau_j, \tau_k\rangle \|^{2m}	\geq \sum_{j=1}^n\sum_{k=1}^n\langle \tau_j, \tau_k\rangle ^{m}\langle \tau_k, \tau_j\rangle ^{m}\geq \frac{n^2}{{d+m-1\choose m}}, \quad \forall m \in \mathbb{N}.
 \end{align*}
 In particular,
 
 \begin{align*}
\sum_{j=1}^n\sum_{k=1}^n\|\langle \tau_j, \tau_k\rangle \|^{2} \geq 	\sum_{j=1}^n\sum_{k=1}^n\langle \tau_j, \tau_k\rangle \langle \tau_k, \tau_j\rangle \geq \frac{n^2}{{d}}.
 \end{align*}
 Further, 
 \begin{align*}
 	\text{(\textbf{Higher order modular Welch bounds})}	\quad		\max _{1\leq j,k \leq n, j\neq k}\|\langle \tau_j, \tau_k\rangle \|^{2m}\geq \frac{1}{n-1}\left[\frac{n}{{d+m-1\choose m}}-1\right], \quad \forall m \in \mathbb{N}.
 \end{align*}
 In particular,
 \begin{align*}
 	\text{(\textbf{First order modular Welch bound})}\quad 	\max _{1\leq j,k \leq n, j\neq k}\|\langle \tau_j, \tau_k\rangle \|^{2}\geq\frac{n-d}{d(n-1)}.
 \end{align*}	
\end{theorem}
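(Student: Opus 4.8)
The plan is to reduce the whole statement to the classical Welch bounds of Theorem~\ref{WELCHTHEOREM} through the Gelfand--Naimark representation of $\mathcal{A}$. Since $\mathcal{A}$ is unital and commutative we identify $\mathcal{A}\cong C(X)$ for a compact Hausdorff space $X$, so that $\mathcal{A}^d\cong C(X)^d$ with inner product $\langle f,g\rangle=\sum_{i=1}^d f_i g_i^*$ computed pointwise. Writing $\tau_j=(\tau_j^1,\dots,\tau_j^d)$, each evaluation $\tau_j(x)=(\tau_j^1(x),\dots,\tau_j^d(x))\in\mathbb{C}^d$ is a genuine unit vector, because the hypothesis $\langle\tau_j,\tau_j\rangle=1$ reads $\sum_{i=1}^d|\tau_j^i(x)|^2=1$ for every $x\in X$; moreover $\langle\tau_j,\tau_k\rangle(x)=\langle\tau_j(x),\tau_k(x)\rangle_{\mathbb{C}^d}$.

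First I would prove the central (second) inequality. Commutativity together with $\langle\tau_k,\tau_j\rangle=\langle\tau_j,\tau_k\rangle^*$ gives $\langle\tau_j,\tau_k\rangle^m\langle\tau_k,\tau_j\rangle^m=|\langle\tau_j,\tau_k\rangle|^{2m}$, a positive element whose value at $x$ equals $|\langle\tau_j(x),\tau_k(x)\rangle|^{2m}$. Applying Theorem~\ref{WELCHTHEOREM} to the unit vectors $\{\tau_j(x)\}_{j=1}^n\subset\mathbb{C}^d$ yields, for every $x\in X$,
\begin{align*}
\sum_{j=1}^n\sum_{k=1}^n|\langle\tau_j(x),\tau_k(x)\rangle|^{2m}\geq\frac{n^2}{\binom{d+m-1}{m}}.
\end{align*}
Since a self-adjoint $a\in C(X)$ satisfies $a\geq\lambda 1$ exactly when $a(x)\geq\lambda$ for all $x$, this is precisely the assertion $\sum_{j,k}\langle\tau_j,\tau_k\rangle^m\langle\tau_k,\tau_j\rangle^m\geq\frac{n^2}{\binom{d+m-1}{m}}\cdot 1$. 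For the first inequality I read the scalar-dominates-positive-element relation as a norm bound: using $\|\langle\tau_j,\tau_k\rangle^m\langle\tau_k,\tau_j\rangle^m\|=\|\langle\tau_j,\tau_k\rangle\|^{2m}$, the triangle inequality gives $\|\sum_{j,k}\langle\tau_j,\tau_k\rangle^m\langle\tau_k,\tau_j\rangle^m\|\leq\sum_{j,k}\|\langle\tau_j,\tau_k\rangle\|^{2m}$, and since the $\mathcal{A}$-valued sum is positive this says exactly that $\sum_{j,k}\|\langle\tau_j,\tau_k\rangle\|^{2m}\cdot 1\geq\sum_{j,k}\langle\tau_j,\tau_k\rangle^m\langle\tau_k,\tau_j\rangle^m$. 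The case $m=1$ is immediate.

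For the higher order modular Welch bound I would argue algebraically from the main bound. Splitting the double sum into its diagonal and off-diagonal parts and using $\langle\tau_j,\tau_j\rangle=1$ gives $\sum_{j\neq k}|\langle\tau_j,\tau_k\rangle|^{2m}\geq\left(\frac{n^2}{\binom{d+m-1}{m}}-n\right)1$ in the order of $\mathcal{A}$. A positive element dominating $\lambda 1$ has norm at least $\lambda$, so the left-hand side has norm at least $\frac{n^2}{\binom{d+m-1}{m}}-n$; on the other hand the triangle inequality bounds that norm above by $\sum_{j\neq k}\|\langle\tau_j,\tau_k\rangle\|^{2m}\leq n(n-1)\max_{j\neq k}\|\langle\tau_j,\tau_k\rangle\|^{2m}$. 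Dividing by $n(n-1)$ yields $\max_{j\neq k}\|\langle\tau_j,\tau_k\rangle\|^{2m}\geq\frac{1}{n-1}\left[\frac{n}{\binom{d+m-1}{m}}-1\right]$, the first order bound being the case $m=1$; the inequality is informative only when $\frac{n}{\binom{d+m-1}{m}}>1$ and is otherwise trivial, the left side being nonnegative.

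The substance of the argument is the reduction rather than any single estimate: once the Gelfand picture is in place, the classical Welch bound applied fibrewise over $X$ does all the work. The step requiring the most care is the consistent bookkeeping of the mixed scalar/C*-algebra inequalities through the order structure of $\mathcal{A}$---reading $a\geq\lambda 1$ as $a(x)\geq\lambda$ and $\lambda 1\geq a$ as $\|a\|\leq\lambda$ for positive $a$---together with the identity that $\langle\tau_j,\tau_k\rangle^m\langle\tau_k,\tau_j\rangle^m$ takes the value $|\langle\tau_j(x),\tau_k(x)\rangle|^{2m}$ at each $x$, which is exactly what lets the classical statement drive the modular one.
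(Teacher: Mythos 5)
Your proof is correct, but it takes a genuinely different route from the paper. The paper reproves Welch's bound from scratch inside the C*-algebra: it writes out the coordinates of each $\tau_j$, expands $B_m=\sum_{j,k}\langle\tau_j,\tau_k\rangle^m\langle\tau_k,\tau_j\rangle^m$ via multinomial coefficients, and applies a Cauchy--Schwarz-type inequality to the resulting sums of algebra elements, exactly mirroring Welch's original computation. You instead invoke Gelfand duality to write $\mathcal{A}\cong C(X)$, observe that the unit-inner-product hypothesis makes each evaluation $\tau_j(x)$ a genuine unit vector in $\mathbb{C}^d$, apply the classical Theorem~\ref{WELCHTHEOREM} fibrewise, and then translate the pointwise scalar inequality into the order and norm statements via the standard facts that for positive $a\in C(X)$ one has $a\geq\lambda 1$ iff $a(x)\geq\lambda$ for all $x$ and $a\leq\lambda 1$ iff $\|a\|\leq\lambda$; all of these steps, including the identity $\langle\tau_j,\tau_k\rangle^m\langle\tau_k,\tau_j\rangle^m=|\langle\tau_j,\tau_k\rangle|^{2m}$ and the final diagonal/off-diagonal splitting, are sound. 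Your argument is shorter, makes the role of commutativity completely transparent, and cleanly explains why the middle term is bounded below pointwise; its cost is that it presupposes the scalar Welch bound (so the modular theorem becomes a corollary of the classical one rather than the other way around, as the paper presents it --- not a circularity, since the classical result has its own proof, but a reversal of the logical dependency) and that it is tied to the free module $\mathcal{A}^d$, where evaluation at a point lands in $\mathbb{C}^d$; the paper's self-contained computation is structurally closer to the trace/spectral-theorem argument it later uses for general rank-$d$ modules in Theorem~\ref{ARBITMODULE}, where a naive fibrewise reduction is not immediately available.
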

\begin{proof}
Our proof is motivated from the proof of Welch \cite{WELCH}. 	Let $\tau_j\coloneqq(a_1^{(j)}, a_2^{(j)}, \dots, a_d^{(j)}) $ for all $1\leq j\leq n$. Define 
\begin{align*}
	B_m\coloneqq \sum_{j=1}^n\sum_{k=1}^n\langle \tau_j, \tau_k\rangle ^{m}\langle \tau_k, \tau_j\rangle ^{m}.
\end{align*}
Then using C*-algebra identity, 
\begin{align*}
	B_m&= \sum_{1\leq j, k \leq n, j\neq k}\langle \tau_j, \tau_k\rangle ^{m}\langle \tau_k, \tau_j\rangle ^{m}+\sum_{j=1}^n\langle \tau_j, \tau_j\rangle ^{2m}\\
	&\leq  \sum_{1\leq j, k \leq n, j\neq k}\|\langle \tau_j, \tau_k\rangle^m \langle \tau_k, \tau_j\rangle ^{m}\|+\sum_{j=1}^n\|\langle \tau_j, \tau_j\rangle ^{2m}\|\\
	&=\sum_{1\leq j, k \leq n, j\neq k}\|\langle \tau_j, \tau_k\rangle^m \|^2+\sum_{j=1}^n\|\langle \tau_j, \tau_j\rangle \|^{2m}\\
	&=\sum_{1\leq j, k \leq n, j\neq k}\|\langle \tau_j, \tau_k\rangle^m \|^2+n\\
	&\leq \sum_{1\leq j, k \leq n, j\neq k}\|\langle \tau_j, \tau_k\rangle \|^{2m}+n\\
	&\leq (n^2-n)	\max _{1\leq j,k \leq n, j\neq k}\|\langle \tau_j, \tau_k\rangle \|^{2m}+n.
\end{align*}
On the other hand, using the definition of standard inner product and Cauchy-Schwarz inequality in Hilbert C*-modules,

\begin{align*}
B_m&= \sum_{j=1}^n\sum_{k=1}^n\left(\sum_{r=1}^{d}a_r^{(j)}(a_r^{(k)})^*\right)	\left(\sum_{s=1}^{d}a_s^{(k)}(a_s^{(j)})^*\right)	\\
&= \sum_{j=1}^n\sum_{k=1}^n\sum_{u_1\cdots u_m=1, v_1\cdots v_m=1}^{d}\prod_{p=1}^{m}a_{u_p}^{(j)}(a_{v_p}^{(j)})^*(a_{u_p}^{(k)})^*a_{v_p}^{(k)}\\
&=\sum_{j=1}^n\sum_{k=1}^n\sum_{u_1\cdots u_m=1, v_1\cdots v_m=1}^{d}\left(\prod_{p=1}^{m}a_{u_p}^{(j)}(a_{v_p}^{(j)})^*\right)\left(\prod_{q=1}^{m}a_{u_q}^{(j)}(a_{v_q}^{(j)})^*\right)^*\\
&=\sum_{x_1\cdots x_d, y_1\cdots y_d, \sum_{r=1}^{d}x_r=\sum_{s=1}^{d}y_s=m} {m \choose x_1, \dots, x_d}{m \choose y_1, \dots, y_d}\times\\
&\quad \left(\sum_{j=1}^{n}\prod_{r=1}^{d}(a_r^{(j)})^{x_r}((a_r^{(j)})^*)^{y_r}\right)\left(\sum_{k=1}^{n}\prod_{s=1}^{d}(a_s^{(k)})^{x_s}((a_s^{(k)})^*)^{y_s}\right)^*\\
&\geq \sum_{x_1\cdots x_d, \sum_{r=1}^{d}x_r=m} \left({m \choose x_1, \dots, x_d}\sum_{j=1}^{n}\prod_{r=1}^{d}(a_r^{(j)})^{x_r}((a_r^{(j)})^*)^{x_r}\right)\times\\
& \quad \left({m \choose x_1, \dots, x_d}\sum_{t=1}^{n}\prod_{l=1}^{d}(a_l^{(t)})^{x_l}((a_l^{(t)})^*)^{x_l}\right)^*\\
&\geq  \frac{1}{\left\|\sum_{x_1\cdots x_d, \sum_{r=1}^{d}x_r=m}\right\|}\left(\sum_{x_1\cdots x_d, \sum_{r=1}^{d}x_r=m} {m \choose x_1, \dots, x_d}\sum_{j=1}^{n}\prod_{r=1}^{d}(a_r^{(j)})^{x_r}((a_r^{(j)})^*)^{x_r}\right)\times \\
&\quad \left(\sum_{x_1\cdots x_d, \sum_{s=1}^{d}x_s=m} {m \choose x_1, \dots, x_d}\sum_{k=1}^{n}\prod_{s=1}^{d}(a_s^{(k)})^{x_s}((a_s^{(k)})^*)^{x_s}\right)^*\\
&= \frac{1}{{d+m-1\choose m}}\left(\sum_{x_1\cdots x_d, \sum_{r=1}^{d}x_r=m} {m \choose x_1, \dots, x_d}\sum_{j=1}^{n}\prod_{r=1}^{d}(a_r^{(j)})^{x_r}((a_r^{(j)})^*)^{x_r}\right)\times \\
&\quad \left(\sum_{x_1\cdots x_d, \sum_{s=1}^{d}x_s=m} {m \choose x_1, \dots, x_d}\sum_{k=1}^{n}\prod_{s=1}^{d}(a_s^{(k)})^{x_s}((a_s^{(k)})^*)^{x_s}\right)^*\\
&= \frac{1}{{d+m-1\choose m}}\left(\sum_{j=1}^{n}\sum_{x_1\cdots x_d, \sum_{r=1}^{d}x_r=m} {m \choose x_1, \dots, x_d}\prod_{r=1}^{d}(a_r^{(j)})^{x_r}((a_r^{(j)})^*)^{x_r}\right)\times \\
&\quad \left(\sum_{k=1}^{n}\sum_{x_1\cdots x_d, \sum_{s=1}^{d}x_s=m} {m \choose x_1, \dots, x_d}\prod_{s=1}^{d}(a_s^{(k)})^{x_s}((a_s^{(k)})^*)^{x_s}\right)^*\\
&= \frac{1}{{d+m-1\choose m}}\left(\sum_{j=1}^{n}\left(\sum_{p=1}^{d}a_p^{(j)}(a_p^{(j)})^*\right)^m\right)\left(\sum_{k=1}^{n}\left(\sum_{q=1}^{d}a_q^{(k)}(a_q^{(k)})^*\right)^m\right)^*\\
&=\frac{1}{{d+m-1\choose m}}\left(\sum_{j=1}^{n}\left(1\right)^m\right)\left(\sum_{k=1}^{n}\left(1\right)^m\right)^*=\frac{n^2}{{d+m-1\choose m}}.
\end{align*}
Hence 

\begin{align*}
	(n^2-n)	\max _{1\leq j,k \leq n, j\neq k}\|\langle \tau_j, \tau_k\rangle \|^{2m}+n&\geq   \sum_{j=1}^n\sum_{k=1}^n\|\langle \tau_j, \tau_k\rangle \|^{2m}	\\
	&\geq \sum_{j=1}^n\sum_{k=1}^n\langle \tau_j, \tau_k\rangle ^{m}\langle \tau_k, \tau_j\rangle ^{m}\\
	&\geq  \frac{n^2}{{d+m-1\choose m}}.
\end{align*}
\end{proof}
\begin{corollary}
	Theorem  \ref{WELCHTHEOREM}    is a corollary of Theorem 	\ref{STANDARDMODULE}.
\end{corollary}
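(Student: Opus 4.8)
The plan is to recover Theorem~\ref{WELCHTHEOREM} simply by specializing Theorem~\ref{STANDARDMODULE} to the simplest unital commutative C*-algebra, namely $\mathcal{A}=\mathbb{C}$ itself, equipped with complex conjugation as its involution and the modulus as its C*-norm. For this choice the standard Hilbert C*-module $\mathcal{A}^d$ is nothing but the Hilbert space $\mathbb{C}^d$ with its usual inner product $\langle x,y\rangle=\sum_{r=1}^{d}x_r\overline{y_r}$, and the module-valued inner product $\langle\tau_j,\tau_k\rangle$ reduces to the ordinary scalar inner product.

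First I would record the two elementary identifications that make this specialization faithful. Since the C*-norm on $\mathbb{C}$ is the modulus, we have $\|\langle\tau_j,\tau_k\rangle\|=|\langle\tau_j,\tau_k\rangle|$, so every norm appearing in Theorem~\ref{STANDARDMODULE} becomes the corresponding modulus appearing in Theorem~\ref{WELCHTHEOREM}. Moreover the identity of $\mathbb{C}$ is the scalar $1$, so the unit inner product condition $\langle\tau_j,\tau_j\rangle=1$ is exactly the unit vector condition, and the two hypotheses agree.

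Next I would observe that the middle member of the displayed chain in Theorem~\ref{STANDARDMODULE} collapses onto the first. Because the involution is conjugation and the inner product is $*$-linear in the second variable, $\langle\tau_k,\tau_j\rangle=\overline{\langle\tau_j,\tau_k\rangle}$, so that
\begin{align*}
\langle\tau_j,\tau_k\rangle^{m}\langle\tau_k,\tau_j\rangle^{m}
=\langle\tau_j,\tau_k\rangle^{m}\,\overline{\langle\tau_j,\tau_k\rangle}^{\,m}
=|\langle\tau_j,\tau_k\rangle|^{2m}.
\end{align*}
Hence the first two quantities in that chain are literally equal, and the chain reduces to the single Welch inequality $\sum_{j,k}|\langle\tau_j,\tau_k\rangle|^{2m}\ge n^2/{d+m-1\choose m}$. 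The higher order bound of Theorem~\ref{WELCHTHEOREM} transfers verbatim once $\|\cdot\|$ is read as $|\cdot|$, and the two first order statements follow by putting $m=1$, using ${d\choose 1}=d$ and simplifying $\tfrac{1}{n-1}\bigl(\tfrac{n}{d}-1\bigr)=\tfrac{n-d}{d(n-1)}$.

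I do not expect any genuine obstacle here: the entire content is the verification that $\mathbb{C}$ is a legitimate instance of the hypotheses of Theorem~\ref{STANDARDMODULE}. The only point that really needs checking is that the standard Hilbert C*-module structure on $\mathbb{C}^d$ over $\mathbb{C}$ coincides with its Hilbert space structure, which is immediate from the definition of the standard inner product.
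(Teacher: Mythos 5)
Your proposal is correct and is precisely the intended argument: the paper states this corollary without proof, and the only natural justification is the specialization $\mathcal{A}=\mathbb{C}$ that you carry out, including the key observations that the C*-norm on $\mathbb{C}$ is the modulus, that unit inner product coincides with unit norm there, and that $\langle\tau_j,\tau_k\rangle^{m}\langle\tau_k,\tau_j\rangle^{m}=|\langle\tau_j,\tau_k\rangle|^{2m}$ collapses the chain of inequalities to Welch's original statement. Nothing is missing.
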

Note that proof of Theorem \ref{STANDARDMODULE}  works for the standard Hilbert C*-module w.r.t. standard inner product. We now try to get the result for arbitrary Hilbert C*-module. For this, we need some results and concepts. 

It is well known that trace of an operator can be determined using orthonormal basis and also from frames in Hilbert spaces (see \cite{HANKORNELSONLARSON}). We now try to derive such results for Hilbert C*-modules.
\begin{theorem}\label{TRACEFU}
	Let $\mathcal{A}$ be a commutative 	unital C*-algebra  and $\mathcal{E}$ be a finite rank (of rank $d$) Hilbert C*-module over $\mathcal{A}$. If $T: \mathcal{E} \to\mathcal{E}$ is an adjointable homomorphism, then for any two (and hence for all) orthonormal bases $\{\tau_j\}_{j=1}^d$ and $\{\omega_j\}_{j=1}^d$ for $\mathcal{E}$, 
	\begin{align*}
		\sum_{j=1}^{d}\langle T\tau_j, \tau_j \rangle =	\sum_{j=1}^{d}\langle T\omega_j, \omega_j \rangle.
	\end{align*}
\end{theorem}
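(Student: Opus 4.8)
The plan is to reduce the basis-independence of the trace to the cyclicity of the matrix trace over a commutative ring, which is precisely where commutativity of $\mathcal{A}$ enters. Since $\mathcal{E}$ has rank $d$, both orthonormal bases are finite, so every sum that appears is finite and no convergence considerations are needed. I would fix the two given orthonormal bases $\{\tau_j\}_{j=1}^d$ and $\{\omega_k\}_{k=1}^d$ and work entirely with the expansion coefficients in $\mathcal{A}$.

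First I would record the reconstruction formula: for an orthonormal basis $\{\omega_k\}_{k=1}^d$ one has $x=\sum_{k=1}^d \langle x,\omega_k\rangle \omega_k$ for every $x\in\mathcal{E}$, verified by pairing the right-hand side against each $\omega_l$ and using $\langle \omega_k,\omega_l\rangle=\delta_{kl}$ together with linearity of the inner product in the first variable. Setting $c_{jk}:=\langle \tau_j,\omega_k\rangle\in\mathcal{A}$ this gives $\tau_j=\sum_{k=1}^d c_{jk}\,\omega_k$, and dually $\omega_k=\sum_{j=1}^d c_{jk}^*\,\tau_j$, since $\langle \omega_k,\tau_j\rangle=\langle\tau_j,\omega_k\rangle^{*}=c_{jk}^{*}$.

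Next, using that $T$ is a homomorphism (hence $\mathcal{A}$-linear) and that the inner product is linear in the first variable and $*$-linear in the second, I would compute the two traces separately. Substituting the expansion of $\tau_j$ and pulling each scalar out on the left yields
\[
\sum_{j=1}^d\langle T\tau_j,\tau_j\rangle=\sum_{j=1}^d\Big\langle \sum_{k=1}^d c_{jk}\,T\omega_k,\ \tau_j\Big\rangle=\sum_{j=1}^d\sum_{k=1}^d c_{jk}\,\langle T\omega_k,\tau_j\rangle,
\]
whereas substituting $\omega_k=\sum_j c_{jk}^*\tau_j$ into the second slot and using $\langle z,a w\rangle=\langle z,w\rangle a^{*}$ produces
\[
\sum_{k=1}^d\langle T\omega_k,\omega_k\rangle=\sum_{k=1}^d\sum_{j=1}^d \langle T\omega_k,\tau_j\rangle\,c_{jk}.
\]

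Finally I would observe that the two double sums agree term by term precisely because $\mathcal{A}$ is commutative, namely $c_{jk}\langle T\omega_k,\tau_j\rangle=\langle T\omega_k,\tau_j\rangle c_{jk}$, giving the asserted equality. I expect the only genuine obstacle to be bookkeeping with the inner-product conventions — keeping track of on which side each coefficient $c_{jk}\in\mathcal{A}$ emerges — and making explicit that commutativity is indispensable: structurally the change-of-basis matrix $U=(c_{jk})\in M_d(\mathcal{A})$ is unitary, and the statement is the cyclicity $\operatorname{tr}\!\big(U[T]U^{*}\big)=\operatorname{tr}\big([T]\big)$, which is valid over a commutative ring but fails for general noncommutative $\mathcal{A}$.
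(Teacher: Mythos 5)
Your proof is correct and follows essentially the same route as the paper's: expand one orthonormal basis in terms of the other via the reconstruction formula and use commutativity of $\mathcal{A}$ to identify the resulting double sums $\sum_{j,k}\langle \tau_j,\omega_k\rangle\langle T\omega_k,\tau_j\rangle$. The only notable difference is organizational: the paper reassembles its double sum through the adjoint as $\langle \omega_k, T^*\omega_k\rangle$, whereas you match the two expansions termwise directly, so your argument never actually invokes adjointability of $T$, only $\mathcal{A}$-linearity.
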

\begin{proof}
	Note that commutative C*-algebras have invariant basis number  property \cite{GIPSON} (see \cite{LEAVITT6, LEAVITT5, LEAVITT4, LEAVITT3, LEAVITT2, LEAVITT} for more on invariant basis number property for rings). Hence any two bases have same number of elements.	By using the commutativity of the C*-algebra, we get 
	\begin{align*}
		\sum_{j=1}^{d}\langle T\tau_j, \tau_j \rangle &=	\sum_{j=1}^{d} \left\langle T\tau_j, \sum_{k=1}^{d}\langle \tau_j, \omega_k \rangle \omega_k \right \rangle =\sum_{j=1}^{d}\sum_{k=1}^{d} \langle T\tau_j, \omega_k \rangle \langle \omega_k, \tau_j \rangle \\
		&=\sum_{k=1}^{d}\sum_{j=1}^{d} \langle T\tau_j, \omega_k \rangle \langle \omega_k, \tau_j \rangle =\sum_{k=1}^{d} \left \langle \sum_{j=1}^n \langle \omega_k, \tau_j \rangle \tau_j, T^*\omega_k \right \rangle \\
		&=\sum_{k=1}^{d} \left \langle \omega_k, T^*\omega_k \right \rangle=	\sum_{k=1}^{d}\langle T\omega_k, \omega_k \rangle.
	\end{align*}
\end{proof}
Theorem  \ref{TRACEFU}  makes the following definition meaningful.
\begin{definition}
	Let $\mathcal{A}$ be a commutative 	unital C*-algebra  and $\mathcal{E}$ be a finite rank (of rank $d$) Hilbert C*-module over $\mathcal{A}$. If $T: \mathcal{E} \to\mathcal{E}$ is an adjointable homomorphism, then the \textbf{trace} of $T$ is defined as 	
	\begin{align*}
		\text{Tra}(T)\coloneqq 	\sum_{j=1}^{d}\langle T\tau_j, \tau_j \rangle,
	\end{align*}
for some 	orthonormal basis $\{\tau_j\}_{j=1}^d$  for $\mathcal{E}$.
\end{definition}
\begin{definition}
	Given a collection  $\{\tau_j\}_{j=1}^n$ in a Hilbert C*-module, the \textbf{frame homomorphism} $S_\tau: \mathcal{E} \to \mathcal{E}$ is defined as 
	
	\begin{align*}
		S_\tau x \coloneqq \sum_{j=1}^{n}\langle x, \tau_j \rangle \tau_j, \quad \forall x \in \mathcal{E}.
	\end{align*}
\end{definition}
Next result shows that trace of frame homomorphism can be recovered from its defining elements.
\begin{theorem}\label{TRACETHEOREM}
		Let $\mathcal{A}$ be a commutative 	unital C*-algebra  and $\mathcal{E}$ be a $d$-rank  Hilbert C*-module over $\mathcal{A}$. If $\{\tau_j\}_{j=1}^n$  is any collection in $\mathcal{E}$, then 
			\begin{align*}
			&	\text{Tra}(S_\tau)=\sum_{j=1}^n\langle \tau_j, \tau_j\rangle,\\
			&	\text{Tra}(S_\tau^2)=	\sum_{j=1}^n\sum_{k=1}^n\langle \tau_j, \tau_k\rangle \langle \tau_k, \tau_j\rangle.
		\end{align*}
\end{theorem}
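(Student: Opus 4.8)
The plan is to evaluate both traces against a fixed orthonormal basis $\{\omega_i\}_{i=1}^d$ of $\mathcal{E}$, using the definition $\text{Tra}(T)=\sum_{i=1}^d\langle T\omega_i,\omega_i\rangle$ (legitimate by Theorem \ref{TRACEFU}), and then to collapse the resulting finite double sums by means of the reconstruction identity $\tau=\sum_{i=1}^d\langle \tau,\omega_i\rangle\omega_i$ together with the commutativity of $\mathcal{A}$, in the same spirit as the computation in the proof of Theorem \ref{TRACEFU}.

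For the first identity, I would substitute the definition of $S_\tau$ and use left-linearity of the module action in the first slot to obtain
\begin{align*}
\langle S_\tau\omega_i,\omega_i\rangle=\sum_{j=1}^n\langle\omega_i,\tau_j\rangle\langle\tau_j,\omega_i\rangle.
\end{align*}
Summing over $i$ and interchanging the finite sums gives $\text{Tra}(S_\tau)=\sum_{j=1}^n\sum_{i=1}^d\langle\omega_i,\tau_j\rangle\langle\tau_j,\omega_i\rangle$. Commutativity of $\mathcal{A}$ lets me reorder each scalar product as $\langle\tau_j,\omega_i\rangle\langle\omega_i,\tau_j\rangle$, and the reconstruction identity applied to $\tau_j$ then collapses $\sum_{i=1}^d\langle\tau_j,\omega_i\rangle\langle\omega_i,\tau_j\rangle=\langle\tau_j,\tau_j\rangle$, yielding $\sum_{j=1}^n\langle\tau_j,\tau_j\rangle$ as claimed.

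For the second identity, I would first record that $S_\tau$ is self-adjoint: a direct check gives $\langle S_\tau x,y\rangle=\sum_j\langle x,\tau_j\rangle\langle\tau_j,y\rangle$ and $\langle x,S_\tau y\rangle=\sum_j\langle\tau_j,y\rangle\langle x,\tau_j\rangle$, and these agree by commutativity. With self-adjointness in hand I write $\langle S_\tau^2\omega_i,\omega_i\rangle=\langle S_\tau\omega_i,S_\tau\omega_i\rangle$ and expand both occurrences of $S_\tau\omega_i=\sum_j\langle\omega_i,\tau_j\rangle\tau_j$, tracking the conjugate-linearity in the second slot so that the coefficient $\langle\omega_i,\tau_k\rangle$ reappears as $\langle\omega_i,\tau_k\rangle^*=\langle\tau_k,\omega_i\rangle$. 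After moving the sum over $i$ inside, this produces
\begin{align*}
\text{Tra}(S_\tau^2)=\sum_{j=1}^n\sum_{k=1}^n\langle\tau_j,\tau_k\rangle\sum_{i=1}^d\langle\omega_i,\tau_j\rangle\langle\tau_k,\omega_i\rangle.
\end{align*}
The inner sum over $i$ is again collapsed, after a commutative reordering, by the reconstruction identity applied to $\tau_j$ to give $\langle\tau_k,\tau_j\rangle$, leaving $\sum_{j=1}^n\sum_{k=1}^n\langle\tau_j,\tau_k\rangle\langle\tau_k,\tau_j\rangle$.

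The only genuine subtlety is the consistent bookkeeping of the two conventions in play: the module action acts on the left and is linear in the first inner-product slot, while the second slot is conjugate-linear. Every reduction of a sum $\sum_i$ to a single inner product, as well as the self-adjointness of $S_\tau$, rests on commutativity of $\mathcal{A}$ to interchange two scalar-valued inner products. This is exactly why the commutativity hypothesis, already needed for the trace to be well-defined via Theorem \ref{TRACEFU}, cannot be dispensed with here; I do not expect any essential analytic difficulty beyond this careful accounting.
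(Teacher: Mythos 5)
Your proposal is correct and follows essentially the same route as the paper: evaluate both traces against a fixed orthonormal basis, use the reconstruction identity $\tau=\sum_{i=1}^{d}\langle \tau,\omega_i\rangle\omega_i$ together with commutativity of $\mathcal{A}$ to collapse the basis sums, and use self-adjointness of $S_\tau$ to write $\langle S_\tau^2\omega_i,\omega_i\rangle=\langle S_\tau\omega_i,S_\tau\omega_i\rangle$. The only cosmetic difference is that you expand both copies of $S_\tau\omega_i$ at once and collapse the $i$-sum at the end, whereas the paper collapses after expanding one copy (reaching $\sum_k\langle S_\tau\tau_k,\tau_k\rangle$) and then expands the second; you also make the self-adjointness check explicit, which the paper leaves implicit.
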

\begin{proof}
	Let $\{\omega_j\}_{j=1}^d$ be an orthonormal basis for $\mathcal{E}$. Then	
\begin{align*}
		\text{Tra}(S_\tau)&=\sum_{j=1}^d\langle S_\tau\omega_j, \omega_j \rangle=\sum_{j=1}^d\left \langle \sum_{k=1}^n \langle \omega_j, \tau_k \rangle \tau_k, \omega_j\right  \rangle =\sum_{j=1}^d\sum_{k=1}^n \langle \omega_j, \tau_k \rangle \langle \tau_k, \omega_j \rangle \\
	&=\sum_{k=1}^n\sum_{j=1}^d \langle \omega_j, \tau_k \rangle \langle \tau_k, \omega_j \rangle=\sum_{k=1}^n\left\langle \sum_{j=1}^d\langle \tau_k,\omega_j \rangle\omega_j, \tau_k \right\rangle =\sum_{k=1}^n\langle \tau_k, \tau_k\rangle
\end{align*}
and 
\begin{align*}
\text{Tra}(S_\tau^2)&=	\sum_{j=1}^d\langle S_\tau^2\omega_j, \omega_j \rangle=\sum_{j=1}^d\langle S_\tau\omega_j,  S_\tau\omega_j \rangle=\sum_{j=1}^d\left \langle \sum_{k=1}^n \langle \omega_j, \tau_k \rangle \tau_k, S_\tau\omega_j\right  \rangle \\
&=\sum_{j=1}^d\sum_{k=1}^{n}\langle \omega_j, \tau_k \rangle\langle \tau_k,S_\tau\omega_j \rangle=\sum_{k=1}^{n}\left\langle \sum_{j=1}^d\langle \tau_k,S_\tau\omega_j \rangle\omega_j, \tau_k \right\rangle \\
&=\sum_{k=1}^{n}\left\langle \sum_{j=1}^d\langle S_\tau^*\tau_k,\omega_j \rangle\omega_j, \tau_k \right\rangle =\sum_{k=1}^{n} \langle S_\tau^*\tau_k, \tau_k\rangle \\
&=\sum_{k=1}^{n} \langle S_\tau\tau_k, \tau_k\rangle =\sum_{k=1}^{n}\left\langle \sum_{j=1}^{n}\left\langle \tau_k,\tau_j\right\rangle \tau_j,\tau_k\right\rangle =\sum_{k=1}^{n}\sum_{j=1}^{n}\langle \tau_k,\tau_j\rangle \langle \tau_j,\tau_k\rangle. 
\end{align*}
\end{proof}
For vector spaces we have the following result.
\begin{theorem}\cite{COMON, BOCCI}\label{SYMMETRICTENSORDIMENSION}
	If $\mathcal{V}$ is a vector space of dimension $d$ and $\text{Sym}^m(\mathcal{V})$ denotes the vector space of symmetric m-tensors, then 
	\begin{align*}
		\text{dim}(\text{Sym}^m(\mathcal{V}))={d+m-1 \choose m}, \quad \forall m \in \mathbb{N}.
	\end{align*}
\end{theorem}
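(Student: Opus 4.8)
The plan is to reduce the statement to the classical combinatorics of monomials by producing an explicit basis of $\text{Sym}^m(\mathcal{V})$. First I would fix a basis $\{e_1, \dots, e_d\}$ of $\mathcal{V}$ and recall the canonical identification of the full symmetric algebra $\text{Sym}(\mathcal{V}) = \bigoplus_{m \geq 0} \text{Sym}^m(\mathcal{V})$ with the polynomial ring $k[t_1, \dots, t_d]$, where $t_r$ corresponds to $e_r$. Under this isomorphism the graded piece $\text{Sym}^m(\mathcal{V})$ is carried onto the space of homogeneous polynomials of degree $m$, so it suffices to compute the dimension of the latter.

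A basis for the degree-$m$ homogeneous part is the set of monomials $t_1^{x_1} \cdots t_d^{x_d}$ with $x_1 + \cdots + x_d = m$ and each $x_r \geq 0$; spanning is immediate from expanding products, and linear independence is the elementary fact that distinct monomials are linearly independent in a polynomial ring. If one prefers to argue intrinsically in $\text{Sym}^m(\mathcal{V})$, the same monomials $e_1^{x_1} \cdots e_d^{x_d}$ span because the symmetric relations allow one to reorder tensor factors, and independence follows from the nondegenerate pairing of $\text{Sym}^m(\mathcal{V})$ with degree-$m$ homogeneous forms on $\mathcal{V}$, under which these monomials are dual to one another up to nonzero multinomial factors.

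It then remains to count the monomials, i.e. the number of $d$-tuples $(x_1, \dots, x_d)$ of nonnegative integers summing to $m$. A stars-and-bars bijection --- arranging $m$ stars and $d-1$ bars in a row, where the bars partition the stars into blocks of sizes $x_1, \dots, x_d$ --- identifies these tuples with the choices of positions for the $d-1$ bars among $m + d - 1$ symbols, giving the count ${d+m-1 \choose d-1} = {d+m-1 \choose m}$. This is exactly the claimed dimension.

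The step I expect to carry the real content is the linear independence of the monomial basis; everything else (spanning and the combinatorial count) is routine. Passing through the polynomial-ring identification makes this step essentially free, since it converts the problem into the standard linear independence of distinct monomials, which is why I would organize the proof around that isomorphism rather than manipulating symmetrization projectors directly.
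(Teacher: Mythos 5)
The paper does not prove this statement; it is quoted as a known result with citations to the literature, so there is no in-paper argument to compare against. Your proof is correct and is the standard one: the identification of $\text{Sym}^m(\mathcal{V})$ with the degree-$m$ homogeneous polynomials in $d$ variables, followed by the stars-and-bars count of monomials, is exactly how this dimension formula is usually established (and since the scalars here are $\mathbb{C}$, the characteristic-zero identification of symmetric tensors with the symmetric power is unproblematic). Note also that the paper immediately extrapolates this counting argument to Theorem \ref{SYMMETRICTENSORDIMENSIONMODULE} for free modules of rank $d$ over a ring with invariant basis number, and your basis-of-monomials argument is precisely the one that transfers: spanning and the stars-and-bars count go through verbatim for a free module, which is the content the paper actually relies on.
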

If a ring has invariant basis number property, we can invoke the proof of Theorem \ref{SYMMETRICTENSORDIMENSION}   easily for modules and get the following result. 
\begin{theorem}\label{SYMMETRICTENSORDIMENSIONMODULE}
Let $\mathcal{R}$ be a ring with invariant basis number property.	If $\mathcal{M}$ is a module  of rank $d$ and $\text{Sym}^m(\mathcal{M})$ denotes the module  of symmetric m-tensors, then 
	\begin{align*}
		\text{rank}(\text{Sym}^m(\mathcal{M}))={d+m-1 \choose m}, \quad \forall m \in \mathbb{N}.
	\end{align*}
\end{theorem}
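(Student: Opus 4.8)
The plan is to mirror the classical vector-space argument of Theorem \ref{SYMMETRICTENSORDIMENSION} at the level of free modules, invoking the invariant basis number property of $\mathcal{R}$ at exactly the two places where the field proof silently uses that dimension is well defined. Since $\mathcal{M}$ has rank $d$, it is free on some basis $\{e_1, \dots, e_d\}$, and the IBN property guarantees that the integer $d$ is a genuine invariant of $\mathcal{M}$ rather than an artefact of the chosen basis. So the whole proof reduces to producing an explicit basis of $\text{Sym}^m(\mathcal{M})$ and counting it.

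First I would recall the construction of $\text{Sym}^m(\mathcal{M})$ as the quotient of the $m$-fold tensor power $\mathcal{M}^{\otimes m}$ by the submodule generated by all elements $v_1 \otimes \cdots \otimes v_m - v_{\sigma(1)} \otimes \cdots \otimes v_{\sigma(m)}$, with $\sigma$ ranging over permutations of $\{1, \dots, m\}$; writing the image of a pure tensor as a symmetric product $v_1 \cdots v_m$, the defining relations say these products are unchanged under reordering of their factors. Expanding each factor in the basis $\{e_i\}$ and using multilinearity together with the symmetry relations, every symmetric product collapses to an $\mathcal{R}$-linear combination of the sorted monomials $e_{i_1} \cdots e_{i_m}$ with $1 \le i_1 \le \cdots \le i_m \le d$, equivalently $e_1^{x_1} \cdots e_d^{x_d}$ with $x_r \ge 0$ and $\sum_{r=1}^{d} x_r = m$. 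These monomials therefore span $\text{Sym}^m(\mathcal{M})$, and the counting step is the stars-and-bars identity: the number of tuples $(x_1, \dots, x_d)$ with $x_r \ge 0$ and $\sum_{r} x_r = m$ is exactly $\binom{d+m-1}{m}$, which already matches the asserted rank and dovetails with the multi-index sums appearing in the proof of Theorem \ref{STANDARDMODULE}.

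The hard part will be the linear independence of these monomials over $\mathcal{R}$, where one cannot fall back on field-theoretic dimension counting as in the vector-space case. The cleanest route is to identify the symmetric algebra of the free module $\mathcal{M} \cong \mathcal{R}^d$ with the polynomial ring $\mathcal{R}[t_1, \dots, t_d]$ (the natural commutative setting, which covers every application in this paper, since there $\mathcal{R}$ is a commutative C*-algebra), under which $\text{Sym}^m(\mathcal{M})$ corresponds to the homogeneous degree-$m$ component; its monomials $t_1^{x_1} \cdots t_d^{x_d}$ are $\mathcal{R}$-independent by the very construction of the polynomial ring. Thus $\text{Sym}^m(\mathcal{M})$ is free on these $\binom{d+m-1}{m}$ monomials, and appealing to IBN once more to make $\text{rank}$ well defined yields $\text{rank}(\text{Sym}^m(\mathcal{M})) = \binom{d+m-1}{m}$ for every $m \in \mathbb{N}$, as required.
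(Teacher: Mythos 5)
Your proposal is correct and follows exactly the route the paper intends: the paper gives no actual proof of Theorem \ref{SYMMETRICTENSORDIMENSIONMODULE}, merely remarking that the vector-space argument of Theorem \ref{SYMMETRICTENSORDIMENSION} carries over once IBN makes rank well defined, and your write-up supplies precisely those details (spanning by sorted monomials, freeness via the identification of the symmetric algebra of $\mathcal{R}^d$ with the degree-$m$ part of $\mathcal{R}[t_1,\dots,t_d]$, and two invocations of IBN). Your explicit restriction to commutative $\mathcal{R}$ is a reasonable and even welcome sharpening, since the polynomial-ring identification genuinely needs it and every application in the paper is over a commutative C*-algebra.
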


\begin{definition}\cite{MANUILOV}
	A W*-algebra is called 	\textbf{$\sigma$-finite} if it contains no more than a countable set of mutually orthogonal projections.
\end{definition}
\begin{definition}\cite{KAPLANSKYANNALS}
	A C*-algebra $\mathcal{A}$ is called an \textbf{AW*-algebra} if the following conditions hold.
	\begin{enumerate}[\upshape(i)]
		\item Any set of orthogonal projections has  supremum.
		\item Any maximal commutative self-adjoint  subalgebra of  $\mathcal{A}$ is generated by its projections. 
	\end{enumerate}
\end{definition}
\begin{theorem} \cite{MANUILOV, HEUNENREYES}\label{SPECTRALTHEOREM}
	(\textbf{Spectral theorem for Hilbert C*-modules}) Let $\mathcal{A}$ be a $\sigma$-finite W*-algebra or an AW*-algebra and let  $\mathcal{E}$ be a finite rank Hilbert C*-module over $\mathcal{A}$.  If  $T: \mathcal{E} \to\mathcal{E}$ is a normal adjointable homomorphism, then it can be diagonalized (by a unitary homomorphism).
\end{theorem}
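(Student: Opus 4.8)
Since the statement is attributed to \cite{MANUILOV, HEUNENREYES}, the plan is to reconstruct the argument those sources use, organised as a reduction to a matrix diagonalization problem. Because $\mathcal{E}$ has finite rank $d$, it is isomorphic as a Hilbert C*-module to the free module $\mathcal{A}^d$ with its standard inner product; fixing such an isomorphism, the adjointable homomorphism $T$ is represented by a matrix $A=(a_{ij})\in M_d(\mathcal{A})$, its adjoint $T^*$ by $A^*=(a_{ji}^*)$, and normality $TT^*=T^*T$ becomes the identity $AA^*=A^*A$. To diagonalize $T$ by a unitary homomorphism is then exactly to produce a unitary $U\in M_d(\mathcal{A})$ with $U^*AU=\mathrm{diag}(\lambda_1,\dots,\lambda_d)$, $\lambda_i\in\mathcal{A}$; the columns of $U$ give the desired orthonormal eigenbasis. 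I would record at this stage that when $\mathcal{A}$ is a $\sigma$-finite W*-algebra (resp.\ an AW*-algebra) so is $M_d(\mathcal{A})$, so the full spectral machinery of the base algebra is available for $A$ inside the larger algebra.

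The heart of the argument is to produce a complete orthogonal system of eigenprojections $p_1,\dots,p_r$ of $A$ with $\sum_i p_i=1$ and $Ap_i=\lambda_i p_i$, which I would do by an induction that splits off one eigenspace at a time. Since $A$ is normal, the subalgebra $\mathcal{B}$ generated by $A$, $A^*$ and $1$ is commutative, and the relevant spectral projections of $A$ arise from the functional calculus on $\mathcal{B}$: in the $\sigma$-finite W*-case from the Borel functional calculus and the projection-valued spectral measure on $\sigma(A)$, and in the AW*-case from the clopen/Baire structure of the Stonean (extremally disconnected) spectrum supplied by Gelfand duality for the commutative AW*-algebra generated by $A$. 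The completeness of the projection lattice, guaranteed by the AW*-axioms (existence of suprema) or by weak closure of a W*-algebra, is what lets one form the required suprema of projections and pass to limits. Once a complete eigenprojection system is in hand, choosing a unit vector in the range of each rank-one piece assembles $U$ and closes the induction.

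The delicate point, and what I expect to be the main obstacle, is the patching hidden in the inductive step. For commutative $\mathcal{A}=C(X)$ the matrix $A$ is a continuous field $x\mapsto A(x)$ of ordinary normal matrices on $\mathbb{C}^d$: each fibre is diagonalizable by the finite-dimensional spectral theorem, but the eigenvalues and eigenvectors need not depend continuously on $x$ — eigenvalues may collide and multiplicities may jump on the degeneracy locus of the discriminant, and there is no canonical continuous eigenvector near such points. Overcoming this is precisely where the hypotheses on $\mathcal{A}$ are used. In the AW*-case the extremal disconnectedness of the Stonean spectrum (equivalently, Dedekind completeness of $C(X)$) is exploited to partition $X$ into clopen pieces on which the multiplicity structure is constant, to extend eigen-data defined on a dense open set to globally continuous data, and to realize the limiting projections inside $C(X)$. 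In the $\sigma$-finite W*-case one instead performs measurable selection of eigenvalues and eigenprojections and uses the countable chain condition to reduce, up to a countable orthogonal partition of the identity, to measurable pieces on which the selection is genuinely diagonalizable. I would expect this selection-and-gluing step, rather than the pointwise linear algebra of the fibres, to carry the real weight of the proof.

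Finally, with the orthogonal eigenprojection system $p_1,\dots,p_d$ so obtained, I would verify that the associated partial isometries assemble into a single unitary $U\in M_d(\mathcal{A})$ conjugating $A$ to the diagonal matrix of its eigenvalues, and translate this back through the fixed isomorphism $\mathcal{E}\cong\mathcal{A}^d$ to conclude that $T$ is diagonalized by a unitary homomorphism of $\mathcal{E}$.
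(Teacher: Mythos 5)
The paper does not prove this statement at all: it is imported verbatim from \cite{MANUILOV, HEUNENREYES} and used as a black box in the proof of Theorem \ref{ARBITMODULE}, so there is no in-paper argument to compare yours against. Measured against the cited sources, your outline does track the actual strategy --- reduce to a normal matrix $A\in M_d(\mathcal{A})$, build a complete orthogonal family of eigenprojections via functional calculus in the commutative algebra generated by $A$ and $A^*$, and use Stonean/clopen decompositions in the AW*-case and measurable selection plus the countable chain condition in the $\sigma$-finite W*-case (this is essentially Kadison's and Heunen--Reyes' line of attack, refining Grove--Pedersen's analysis over $C(X)$).

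However, as written this is a roadmap rather than a proof. You correctly identify the decisive obstacle --- eigenvalue collisions and the absence of continuous eigenvector selections near the degeneracy locus --- and then state that the hypotheses on $\mathcal{A}$ ``are used'' to overcome it, without actually carrying out the partition into pieces of constant multiplicity, the extension of eigendata from a dense open set, or the measurable selection and gluing. Since you yourself flag this step as carrying the real weight of the proof, deferring it means the attempt does not establish the theorem. Two smaller points also need care: (i) a finite rank Hilbert C*-module over a general unital C*-algebra need not be free, so the opening isomorphism $\mathcal{E}\cong\mathcal{A}^d$ requires justification (it holds if ``rank $d$'' is taken to mean ``admits an orthonormal basis of $d$ elements,'' which is how this paper uses the term, but that should be said); and (ii) the claim that the columns of $U$ form ``rank-one pieces'' from which one chooses unit vectors presupposes that each eigenprojection is equivalent to a standard basis projection, which is itself part of what must be proved in the noncommutative setting.
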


\begin{theorem}
\textbf{(Modular Welch bounds)}\label{ARBITMODULE}
Let $\mathcal{A}$ be a commutative $\sigma$-finite W*-algebra or a commutative  AW*-algebra. Let $\mathcal{E}$ be  a Hilbert C*-module over $\mathcal{A}$ of rank $d$.  Let $n\geq d$. If 	$\{\tau_j\}_{j=1}^n$ is any collection of unit inner product  vectors in $\mathcal{E}$, then 
\begin{align*}
	\sum_{j=1}^n\sum_{k=1}^n\|\langle \tau_j, \tau_k\rangle \|^{2m}	\geq \sum_{j=1}^n\sum_{k=1}^n\langle \tau_j, \tau_k\rangle ^{m}\langle \tau_k, \tau_j\rangle ^{m}\geq \frac{n^2}{{d+m-1\choose m}}, \quad \forall m \in \mathbb{N}.
\end{align*}
In particular,
\begin{align*}
	\sum_{j=1}^n\sum_{k=1}^n\|\langle \tau_j, \tau_k\rangle \|^{2} \geq 	\sum_{j=1}^n\sum_{k=1}^n\langle \tau_j, \tau_k\rangle \langle \tau_k, \tau_j\rangle \geq \frac{n^2}{{d}}.
\end{align*}
Further, 
\begin{align*}
	\text{(\textbf{Higher order modular Welch bounds})}	\quad		\max _{1\leq j,k \leq n, j\neq k}\|\langle \tau_j, \tau_k\rangle \|^{2m}\geq \frac{1}{n-1}\left[\frac{n}{{d+m-1\choose m}}-1\right], \quad \forall m \in \mathbb{N}.
\end{align*}
In particular,
\begin{align*}
	\text{(\textbf{First order modular Welch bound})}\quad 	\max _{1\leq j,k \leq n, j\neq k}\|\langle \tau_j, \tau_k\rangle \|^{2}\geq\frac{n-d}{d(n-1)}.
\end{align*}		
\end{theorem}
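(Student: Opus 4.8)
The plan is to split the claim into the same two inequalities as in the proof of Theorem \ref{STANDARDMODULE} and to notice that only the lower bound needs a genuinely new argument. Set $B_m \coloneqq \sum_{j=1}^n\sum_{k=1}^n \langle\tau_j,\tau_k\rangle^m\langle\tau_k,\tau_j\rangle^m$. The whole upper chain from Theorem \ref{STANDARDMODULE}, namely
\[
B_m \leq \sum_{1\leq j,k\leq n,\, j\neq k}\|\langle\tau_j,\tau_k\rangle^m\|^2 + n \leq (n^2-n)\max_{1\leq j,k\leq n,\, j\neq k}\|\langle\tau_j,\tau_k\rangle\|^{2m}+n,
\]
uses only the C*-identity $\|\langle\tau_j,\tau_k\rangle^m\langle\tau_k,\tau_j\rangle^m\| = \|\langle\tau_j,\tau_k\rangle^m\|^2$, submultiplicativity $\|a^m\|\leq\|a\|^m$, and the unit inner product hypothesis; none of these refer to the coordinate structure of $\mathcal{A}^d$, so they transfer verbatim to the arbitrary module $\mathcal{E}$. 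Hence it suffices to prove the module-theoretic lower bound $B_m \geq n^2/\binom{d+m-1}{m}$ (the inequality being understood in the C*-order, with the right-hand side meaning $\tfrac{n^2}{\binom{d+m-1}{m}}\,1_{\mathcal{A}}$).

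For the lower bound I would pass to symmetric tensor powers. Form the $m$-fold symmetric tensor power module $\text{Sym}^m(\mathcal{E})$ over the same commutative algebra $\mathcal{A}$; by Theorem \ref{SYMMETRICTENSORDIMENSIONMODULE} it has rank $D \coloneqq \binom{d+m-1}{m}$. Since $\mathcal{A}$ is commutative, the interior tensor product inner product is multiplicative on elementary tensors, so $\langle\tau^{\otimes m},\omega^{\otimes m}\rangle = \langle\tau,\omega\rangle^m$ for all $\tau,\omega\in\mathcal{E}$. In particular $\langle\tau_j^{\otimes m},\tau_j^{\otimes m}\rangle = \langle\tau_j,\tau_j\rangle^m = 1$, so $\{\tau_j^{\otimes m}\}_{j=1}^n$ is a unit inner product family in $\text{Sym}^m(\mathcal{E})$. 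Letting $S$ be its frame homomorphism and applying Theorem \ref{TRACETHEOREM} to $\{\tau_j^{\otimes m}\}_{j=1}^n$ inside the rank-$D$ module $\text{Sym}^m(\mathcal{E})$ gives
\[
\text{Tra}(S) = \sum_{j=1}^n\langle\tau_j,\tau_j\rangle^m = n, \qquad \text{Tra}(S^2) = \sum_{j=1}^n\sum_{k=1}^n\langle\tau_j,\tau_k\rangle^m\langle\tau_k,\tau_j\rangle^m = B_m.
\]

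The remaining step is a Cauchy--Schwarz inequality for this trace. The homomorphism $S$ is positive and adjointable, hence normal, so Theorem \ref{SPECTRALTHEOREM} (applicable because $\mathcal{A}$ is a $\sigma$-finite commutative W*-algebra or commutative AW*-algebra and $\text{Sym}^m(\mathcal{E})$ has finite rank) diagonalizes $S$ by a unitary homomorphism, producing positive eigenvalues $\lambda_1,\dots,\lambda_D\in\mathcal{A}$. Evaluating the (basis-independent) trace in the diagonalizing orthonormal basis yields $\text{Tra}(S) = \sum_{i=1}^D\lambda_i = n$ and $\text{Tra}(S^2) = \sum_{i=1}^D\lambda_i^2$. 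In the commutative C*-algebra $\mathcal{A}$ the self-adjoint elements $\lambda_i$ satisfy the identity $D\sum_{i=1}^D\lambda_i^2 - \big(\sum_{i=1}^D\lambda_i\big)^2 = \sum_{1\leq i<l\leq D}(\lambda_i-\lambda_l)^2 \geq 0$, the right-hand side being a sum of squares of self-adjoint elements. Therefore $B_m = \text{Tra}(S^2) = \sum_i\lambda_i^2 \geq \tfrac{1}{D}\big(\sum_i\lambda_i\big)^2 = n^2/D$, which is the desired lower bound; combining it with the upper chain delivers the higher-order and first-order statements exactly as in Theorem \ref{STANDARDMODULE}.

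The main obstacle is the construction of $\text{Sym}^m(\mathcal{E})$ as a bona fide finite-rank Hilbert C*-module over the \emph{same} algebra $\mathcal{A}$, carrying the multiplicative inner product $\langle\tau^{\otimes m},\omega^{\otimes m}\rangle = \langle\tau,\omega\rangle^m$; one must check that the quotient defining the symmetric tensors remains an inner-product module (so that the trace and spectral machinery apply) and that its rank is exactly $D$, which is precisely where Theorem \ref{SYMMETRICTENSORDIMENSIONMODULE} and the invariant basis number property enter. A secondary technical point is to justify that the spectral theorem genuinely delivers eigenvalue elements of $\mathcal{A}$ for which the order inequality $\big(\sum_i\lambda_i\big)^2\leq D\sum_i\lambda_i^2$ holds in the C*-order; here commutativity of $\mathcal{A}$ is essential, since it is what makes the sum-of-squares identity valid.
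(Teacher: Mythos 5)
Your proposal is correct and follows essentially the same route as the paper: pass to $\{\tau_j^{\otimes m}\}_{j=1}^n$ in $\text{Sym}^m(\mathcal{E})$, use Theorem \ref{TRACETHEOREM} and Theorem \ref{SYMMETRICTENSORDIMENSIONMODULE} to identify $\text{Tra}(S)=n$ and $\text{Tra}(S^2)=B_m$, diagonalize via Theorem \ref{SPECTRALTHEOREM}, and bound $\bigl(\sum_i\lambda_i\bigr)^2\leq D\sum_i\lambda_i^2$ before appending the same norm-estimate chain as in Theorem \ref{STANDARDMODULE}. Your only deviation is cosmetic: you justify the eigenvalue inequality by the explicit sum-of-squares identity for commuting self-adjoint elements, where the paper simply cites the Cauchy--Schwarz inequality in Hilbert C*-modules, and your version is if anything the more transparent justification of that step.
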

\begin{proof}
	We first note that a W*-algebra as well as an AW*-algebra is always unital \cite{SAITO, SAKAI}. Next note that commutative C*-algebras have  invariant basis number property \cite{GIPSON} and hence the notion rank is meaningful. We consider the collection $\{\tau^{\otimes m}_j\}_{j=1}^n$ in $\text{Sym}^m(\mathcal{E})$. Theorem  \ref{SPECTRALTHEOREM}  says that the adjointable positive (self-adjoint) homomorphism 
	\begin{align*}
S_\tau:	\text{Sym}^m(\mathcal{E})\ni x \mapsto \sum_{j=1}^{n}	\langle x, \tau^{\otimes m}_j\rangle \tau^{\otimes m}_j \in \text{Sym}^m(\mathcal{E})
	\end{align*}
	can be diagonalized. Let $\lambda_1, \dots, \lambda_{\text{rank}(\text{Sym}^m(\mathcal{E}))}$ be the  eigenvalues	of $S_\tau$. Using Theorem \ref{TRACETHEOREM}, 	Theorem \ref{SYMMETRICTENSORDIMENSIONMODULE} and the Cauchy-Schwarz inequality in Hilbert C*-module, we then get 
	
\begin{align*}
	n^2&=\left(\sum_{j=1}^{n}\langle \tau_j, \tau_j \rangle ^{m}\right)^2=\left(\sum_{j=1}^{n}\langle \tau_j^{\otimes m}, \tau_j^{\otimes m} \rangle \right)^2=(\operatorname{Tra}(S_{\tau}))^2\\
	&=\left(\sum_{k=1}^{\text{rank}(\text{Sym}^m(\mathcal{E}))}
	\lambda_k\right)^2\leq \text{rank}(\text{Sym}^m(\mathcal{E})) \sum_{k=1}^{\text{rank}(\text{Sym}^m(\mathcal{E}))}
	\lambda_k^2\\
	&={d+m-1 \choose m}\operatorname{Tra}(S^2_{\tau})={d+m-1 \choose m}\sum_{j=1}^n\sum_{k=1}^n\langle \tau_j^{\otimes m}, \tau^{\otimes m}_k\rangle \langle \tau^{\otimes m}_k, \tau_j^{\otimes m}\rangle.\\
	&={d+m-1 \choose m}\sum_{j=1}^n\sum_{k=1}^n\langle \tau_j, \tau_k\rangle ^m\langle \tau_k, \tau_j\rangle^m
\end{align*}	
	and hence 
	\begin{align*}
	\frac{ n^2}{{d+m-1 \choose m}}	&=\sum_{j=1}^n\sum_{k=1}^n\langle \tau_j, \tau_k\rangle ^m\langle \tau_k, \tau_j\rangle^m=\sum_{1\leq j, k \leq n, j \neq k}\langle \tau_j, \tau_k\rangle ^m\langle \tau_k, \tau_j\rangle^m+\sum_{j=1}^n\langle \tau_j, \tau_j\rangle ^{2m}\\
	&=\sum_{1\leq j, k \leq n, j \neq k}\langle \tau_j, \tau_k\rangle ^m\langle \tau_k, \tau_j\rangle^m+n\leq \sum_{1\leq j, k \leq n, j \neq k}\|\langle \tau_j, \tau_k\rangle ^m\langle \tau_k, \tau_j\rangle^m\|+n\\
	&=\sum_{1\leq j, k \leq n, j \neq k}\|\langle \tau_j, \tau_k\rangle \|^{2m}+n\leq (n^2-n)	\max _{1\leq j,k \leq n, j\neq k}\|\langle \tau_j, \tau_k\rangle \|^{2m}+n
	\end{align*}
which gives the required inequality.
\end{proof}
We clearly  have the following corollary.
\begin{corollary}
	Theorem  \ref{WELCHTHEOREM}    is a corollary of Theorem 	\ref{ARBITMODULE}.
\end{corollary}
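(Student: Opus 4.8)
The plan is to lift the problem from $\mathcal{E}$ to the symmetric tensor module $\text{Sym}^m(\mathcal{E})$ and then run a trace-and-eigenvalue argument, exactly as the linear-algebra proof of Welch does, but with scalars replaced by (positive) elements of the commutative algebra $\mathcal{A}$. Since a commutative $\sigma$-finite W*-algebra or commutative AW*-algebra is unital and has the invariant basis number property, the rank of $\mathcal{E}$ is well defined, and by Theorem \ref{SYMMETRICTENSORDIMENSIONMODULE} the module $\text{Sym}^m(\mathcal{E})$ has rank $N\coloneqq\binom{d+m-1}{m}$. The crucial feature of the map $\tau_j\mapsto \tau_j^{\otimes m}$ is that it converts inner products into $m$-th powers, $\langle \tau_j^{\otimes m},\tau_k^{\otimes m}\rangle=\langle \tau_j,\tau_k\rangle^m$, which is what produces the binomial coefficient and the exponent $2m$ in the final bound.

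First I would form the frame homomorphism $S_\tau$ on $\text{Sym}^m(\mathcal{E})$ associated with $\{\tau_j^{\otimes m}\}_{j=1}^n$; it is adjointable, self-adjoint and positive, so Theorem \ref{SPECTRALTHEOREM} applies and $S_\tau$ can be diagonalized. Let $\lambda_1,\dots,\lambda_N$ be its eigenvalues, which are positive elements of $\mathcal{A}$. Next I would record the two trace identities supplied by Theorem \ref{TRACETHEOREM}: using the unit inner product hypothesis, $\text{Tra}(S_\tau)=\sum_{j=1}^n\langle\tau_j^{\otimes m},\tau_j^{\otimes m}\rangle=\sum_{j=1}^n\langle\tau_j,\tau_j\rangle^m=n\cdot 1$, and $\text{Tra}(S_\tau^2)=\sum_{j=1}^n\sum_{k=1}^n\langle\tau_j,\tau_k\rangle^m\langle\tau_k,\tau_j\rangle^m$. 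Since after diagonalization the trace is the sum of the eigenvalues, these read $\sum_k\lambda_k=n\cdot 1$ and $\sum_k\lambda_k^2=\text{Tra}(S_\tau^2)$.

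The heart of the argument is the Cauchy--Schwarz step $\big(\sum_k\lambda_k\big)^2\leq N\sum_k\lambda_k^2$. This is an inequality between positive elements of the commutative C*-algebra $\mathcal{A}$; via the Gelfand representation $\mathcal{A}\cong C(X)$ it reduces to the scalar Cauchy--Schwarz inequality applied pointwise, so it holds in the order of $\mathcal{A}$. Combined with the two trace identities it yields $n^2\cdot 1\leq N\,\text{Tra}(S_\tau^2)=N\sum_{j,k}\langle\tau_j,\tau_k\rangle^m\langle\tau_k,\tau_j\rangle^m$, which is the middle inequality of the theorem read in the operator order. I would then split off the diagonal, $\sum_{j,k}\langle\tau_j,\tau_k\rangle^m\langle\tau_k,\tau_j\rangle^m=\sum_{j\neq k}\langle\tau_j,\tau_k\rangle^m\langle\tau_k,\tau_j\rangle^m+n\cdot 1$, and pass to norms: monotonicity of the norm on positive elements gives $n^2/N\leq\big\|\sum_{j\neq k}\langle\tau_j,\tau_k\rangle^m\langle\tau_k,\tau_j\rangle^m\big\|+n$, while the triangle inequality together with the identity $\|\langle\tau_j,\tau_k\rangle^m\langle\tau_k,\tau_j\rangle^m\|=\|\langle\tau_j,\tau_k\rangle^m\|^2=\|\langle\tau_j,\tau_k\rangle\|^{2m}$ (valid since every element of a commutative C*-algebra is normal) bounds the right side by $(n^2-n)\max_{j\neq k}\|\langle\tau_j,\tau_k\rangle\|^{2m}+n$. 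Rearranging delivers the higher-order modular bound, and $m=1$ gives the first-order bound; the leftmost inequality of the theorem is the elementary observation that the positive element has norm at most the sum of the norms of its summands.

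The main obstacle, and the reason the hypotheses are stronger than in Theorem \ref{STANDARDMODULE}, is the very first step: for an arbitrary (not necessarily standard) finite-rank module one cannot expand inner products in explicit coordinates, so Welch's combinatorial identity must be replaced by the spectral and trace machinery, and diagonalizability of the positive homomorphism $S_\tau$ is exactly what forces $\mathcal{A}$ to be a $\sigma$-finite W*-algebra or an AW*-algebra through Theorem \ref{SPECTRALTHEOREM}. A secondary point to handle with care is that the eigenvalues $\lambda_k$ are algebra elements rather than scalars, so every inequality before the final estimate must be read in the partial order of $\mathcal{A}$; the transition to a numerical inequality occurs only at the last step, through the norm.
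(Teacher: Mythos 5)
Your proposal proves the wrong statement. What you have written out --- the passage to $\text{Sym}^m(\mathcal{E})$, the appeal to the spectral theorem (Theorem \ref{SPECTRALTHEOREM}) and the trace identities (Theorem \ref{TRACETHEOREM}), the Cauchy--Schwarz inequality $\left(\sum_k \lambda_k\right)^2 \leq N \sum_k \lambda_k^2$ for the algebra-valued eigenvalues, and the final passage to norms --- is a proof of Theorem \ref{ARBITMODULE} itself, and indeed it tracks the paper's proof of that theorem almost line by line. But the corollary does not ask for a proof of Theorem \ref{ARBITMODULE}; it asserts that the classical Welch bounds of Theorem \ref{WELCHTHEOREM} are a \emph{consequence} of Theorem \ref{ARBITMODULE}. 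Its proof is a specialization of the hypotheses, and nowhere in your argument do you specialize anything or make contact with the scalar statement of Theorem \ref{WELCHTHEOREM}.

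The argument that is actually needed is short. Take $\mathcal{A}=\mathbb{C}$: this is a commutative W*-algebra which is $\sigma$-finite (its only projections are $0$ and $1$), and $\mathcal{E}=\mathbb{C}^d$ is a Hilbert C*-module over it of rank $d$, namely the Hilbert space $\mathbb{C}^d$ with its usual inner product. A collection of unit inner product vectors is then exactly a collection of unit vectors, $\|\langle \tau_j,\tau_k\rangle\|=|\langle \tau_j,\tau_k\rangle|$, and $\langle \tau_j,\tau_k\rangle^m\langle \tau_k,\tau_j\rangle^m=|\langle \tau_j,\tau_k\rangle|^{2m}$, so every displayed inequality of Theorem \ref{ARBITMODULE} collapses to the corresponding inequality of Theorem \ref{WELCHTHEOREM}. (The paper offers no written proof --- it says only ``We clearly have the following corollary'' --- but this specialization is plainly what is intended.) Your text would serve as a solution to the exercise ``prove Theorem \ref{ARBITMODULE}'', where it matches the paper's method; as a proof of this corollary it is a non sequitur.
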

Since the spectral theorem fails for arbitrary Hilbert C*-modules \cite{KADISON, KADISON2, GROVEPEDERSEN} we can not do proof of Theorem \ref{ARBITMODULE} for Hilbert C*-modules over arbitrary unital C*-algebras. 

Following theorem  follows from a similar arguments used in the proof of Theorem \ref{ARBITMODULE}.
\begin{theorem}
Let $\mathcal{A}$ be a commutative $\sigma$-finite W*-algebra or a commutative  AW*-algebra. Let $\mathcal{E}$ be  a Hilbert C*-module over $\mathcal{A}$ of rank $d$.  Let $n\geq d$. If 	$\{\tau_j\}_{j=1}^n$ is any collection of unit inner product  vectors in $\mathcal{E}$, then 
\begin{align*}
	\sum_{j=1}^n\sum_{k=1}^n\|\langle \tau_j, \tau_k\rangle \|^{2m}	\geq \sum_{j=1}^n\sum_{k=1}^n\langle \tau_j, \tau_k\rangle ^{m}\langle \tau_k, \tau_j\rangle ^{m}\geq \frac{1}{{d+m-1\choose m}}\left(\sum_{j=1}^{n}\langle \tau_j, \tau_j \rangle ^{m}\right)^2, \quad \forall m \in \mathbb{N}.
\end{align*}
In particular,
\begin{align*}
	\sum_{j=1}^n\sum_{k=1}^n\|\langle \tau_j, \tau_k\rangle \|^{2} \geq 	\sum_{j=1}^n\sum_{k=1}^n\langle \tau_j, \tau_k\rangle \langle \tau_k, \tau_j\rangle \geq \frac{1}{{d}}\left(\sum_{j=1}^{n}\langle \tau_j, \tau_j \rangle ^{m}\right)^2.
\end{align*}
Further,
\begin{align*}
		\max _{1\leq j,k \leq n, j\neq k}\|\langle \tau_j, \tau_k\rangle \|^{2m} \geq \frac{1}{n^2-n}\left[\frac{1}{{d+m-1\choose m}}\left(\sum_{j=1}^{n}\langle \tau_j, \tau_j \rangle ^{m}\right)^2-\sum_{j=1}^{n}\|\tau_j\|^{4m}\right], \quad \forall m \in \mathbb{N}.
\end{align*}
In particular, 
\begin{align*}
	\max _{1\leq j,k \leq n, j\neq k}\|\langle \tau_j, \tau_k\rangle \|^{2} \geq \frac{1}{n^2-n}\left[\frac{1}{d}\left(\sum_{j=1}^{n}\langle \tau_j, \tau_j \rangle \right)^2-\sum_{j=1}^{n}\|\tau_j\|^{4}\right].
\end{align*}
\end{theorem}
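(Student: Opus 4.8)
The plan is to run the argument of Theorem~\ref{ARBITMODULE} without ever invoking the normalisation $\sum_{j=1}^n\langle\tau_j,\tau_j\rangle^m=n$; keeping this quantity general is exactly what produces the stated bounds. First I would pass to the $m$-th symmetric tensor power and consider the family $\{\tau_j^{\otimes m}\}_{j=1}^n$ in $\text{Sym}^m(\mathcal{E})$, using the identity $\langle\tau_j^{\otimes m},\tau_k^{\otimes m}\rangle=\langle\tau_j,\tau_k\rangle^m$. A W*-algebra and an AW*-algebra are unital \cite{SAITO, SAKAI}, and commutative C*-algebras enjoy the invariant basis number property \cite{GIPSON}, so rank is meaningful and $\text{rank}(\text{Sym}^m(\mathcal{E}))=\binom{d+m-1}{m}$ by Theorem~\ref{SYMMETRICTENSORDIMENSIONMODULE}. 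The frame homomorphism $S_\tau$ on $\text{Sym}^m(\mathcal{E})$ is positive and adjointable, so Theorem~\ref{SPECTRALTHEOREM} diagonalises it with eigenvalues $\lambda_1,\dots,\lambda_{\binom{d+m-1}{m}}\in\mathcal{A}$.

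Next I would extract the two traces from Theorem~\ref{TRACETHEOREM} applied to $\{\tau_j^{\otimes m}\}$, obtaining $\operatorname{Tra}(S_\tau)=\sum_{j=1}^n\langle\tau_j,\tau_j\rangle^m$ and $\operatorname{Tra}(S_\tau^2)=\sum_{j=1}^n\sum_{k=1}^n\langle\tau_j,\tau_k\rangle^m\langle\tau_k,\tau_j\rangle^m$. The central step is Cauchy-Schwarz on the eigenvalues: identifying $\mathcal{A}\cong C(X)$ by Gelfand duality, the scalar inequality $(\sum_k\lambda_k)^2\le N\sum_k\lambda_k^2$ holds pointwise for the self-adjoint elements $\lambda_k$ and hence as an inequality of positive elements of $\mathcal{A}$; with $N=\binom{d+m-1}{m}$ this gives
\begin{align*}
\Big(\sum_{j=1}^n\langle\tau_j,\tau_j\rangle^m\Big)^2\le\binom{d+m-1}{m}\sum_{j=1}^n\sum_{k=1}^n\langle\tau_j,\tau_k\rangle^m\langle\tau_k,\tau_j\rangle^m,
\end{align*}
which is the middle-to-right portion of the first displayed chain. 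The left-to-middle portion is termwise: writing $a=\langle\tau_j,\tau_k\rangle^m$ and using commutativity so that $\langle\tau_k,\tau_j\rangle^m=a^*$, one has $aa^*\le\|aa^*\|\,1=\|\langle\tau_j,\tau_k\rangle\|^{2m}\,1$, and summing bounds the middle expression by $\sum_{j,k}\|\langle\tau_j,\tau_k\rangle\|^{2m}$.

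For the maximal-correlation estimate I would split the double sum into its off-diagonal and diagonal parts and then pass to norms, using that the norm is monotone on positive elements. Thus from the order inequality above,
\begin{align*}
\frac{1}{\binom{d+m-1}{m}}\Big\|\sum_{j=1}^n\langle\tau_j,\tau_j\rangle^m\Big\|^2\le\Big\|\sum_{j\ne k}\langle\tau_j,\tau_k\rangle^m\langle\tau_k,\tau_j\rangle^m+\sum_{j=1}^n\langle\tau_j,\tau_j\rangle^{2m}\Big\|,
\end{align*}
and the triangle inequality together with $\|\langle\tau_j,\tau_k\rangle^m\langle\tau_k,\tau_j\rangle^m\|=\|\langle\tau_j,\tau_k\rangle\|^{2m}$ and $\|\langle\tau_j,\tau_j\rangle^{2m}\|=\|\tau_j\|^{4m}$ bounds the right-hand side by $(n^2-n)\max_{j\ne k}\|\langle\tau_j,\tau_k\rangle\|^{2m}+\sum_{j=1}^n\|\tau_j\|^{4m}$. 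Rearranging yields the asserted lower bound on the maximum, and the case $m=1$ follows by substitution.

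The step I expect to require the most care --- though it is more bookkeeping than genuine obstruction --- is the passage from the order inequality in $\mathcal{A}$ to the numerical bounds: the norm must be applied only after the Cauchy-Schwarz step, its monotonicity on positive elements invoked, and the diagonal term recorded honestly as $\sum_{j=1}^n\|\langle\tau_j,\tau_j\rangle^{2m}\|=\sum_{j=1}^n\|\tau_j\|^{4m}$ rather than collapsed to $n$ as in the unit inner product setting of Theorem~\ref{ARBITMODULE}.
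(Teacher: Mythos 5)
Your proposal is correct and takes essentially the same route the paper intends: the paper offers no separate argument, only the remark that the result ``follows from a similar arguments used in the proof of Theorem \ref{ARBITMODULE}'', and your write-up is exactly that proof with the normalisation $\sum_{j=1}^{n}\langle\tau_j,\tau_j\rangle^{m}=n$ withheld, the diagonal recorded as $\sum_{j=1}^{n}\|\tau_j\|^{4m}$, and the eigenvalue Cauchy--Schwarz step justified pointwise via Gelfand duality. The care you flag about applying the norm only after the order inequality in $\mathcal{A}$ and using its monotonicity on positive elements is precisely the bookkeeping the paper leaves implicit.
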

We now note that there are four more bounds along with Welch bounds. First,  we recall  a definition.
\begin{definition}\cite{JASPERKINGMIXON}
Given $d\in \mathbb{N}$, define \textbf{Gerzon's bound}
\begin{align*}
	\mathcal{Z}(d, \mathbb{K})\coloneqq 
	\left\{ \begin{array}{cc} 
		d^2 & \quad \text{if} \quad \mathbb{K} =\mathbb{C}\\
	\frac{d(d+1)}{2} & \quad \text{if} \quad \mathbb{K} =\mathbb{R}.\\
	\end{array} \right.
\end{align*}	
\end{definition}
\begin{theorem}\cite{JASPERKINGMIXON, XIACORRECTION, MUKKAVILLISABHAWALERKIPAAZHANG, SOLTANALIAN, BUKHCOX, CONWAYHARDINSLOANE, HAASHAMMENMIXON, RANKIN}  \label{LEVENSTEINBOUND}
Define $m\coloneqq \operatorname{dim}_{\mathbb{R}}(\mathbb{K})/2$.	If	$\{\tau_j\}_{j=1}^n$  is any collection of  unit vectors in $\mathbb{K}^d$, then
\begin{enumerate}[\upshape(i)]
	\item (\textbf{Bukh-Cox bound})
	\begin{align*}
		\max _{1\leq j,k \leq n, j\neq k}|\langle \tau_j, \tau_k\rangle |\geq \frac{\mathcal{Z}(n-d, \mathbb{K})}{n(1+m(n-d-1)\sqrt{m^{-1}+n-d})-\mathcal{Z}(n-d, \mathbb{K})}\quad \text{if} \quad n>d.
	\end{align*}
	\item (\textbf{Orthoplex/Rankin bound})	
	\begin{align*}
		\max _{1\leq j,k \leq n, j\neq k}|\langle \tau_j, \tau_k\rangle |\geq\frac{1}{\sqrt{d}} \quad \text{if} \quad n>\mathcal{Z}(d, \mathbb{K}).
	\end{align*}
	\item (\textbf{Levenstein bound})	
	\begin{align*}
		\max _{1\leq j,k \leq n, j\neq k}|\langle \tau_j, \tau_k\rangle |\geq \sqrt{\frac{n(m+1)-d(md+1)}{(n-d)(md+1)}} \quad \text{if} \quad n>\mathcal{Z}(d, \mathbb{K}).
	\end{align*}
\item (\textbf{Exponential bound})
	\begin{align*}
	\max _{1\leq j,k \leq n, j\neq k}|\langle \tau_j, \tau_k\rangle |\geq 1-2n^{\frac{-1}{d-1}}.
\end{align*}
\end{enumerate}	
\end{theorem}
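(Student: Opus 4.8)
The plan is to treat the four inequalities separately, since each rests on a distinct classical technique, and to reduce each to a statement already available in the cited literature. Throughout I write $P_j \coloneqq \tau_j\tau_j^*$ for the rank-one orthogonal projection onto $\mathbb{K}\tau_j$, and I identify the $\langle \tau_j, \tau_k\rangle$-data with inner products of the $P_j$ in the real Euclidean space $\mathcal{H}_d$ of Hermitian (respectively symmetric) $d\times d$ matrices, whose real dimension is exactly $\mathcal{Z}(d,\mathbb{K})$. The identity $\operatorname{tr}(P_jP_k)=|\langle \tau_j,\tau_k\rangle|^2$ converts coherence questions into geometric questions about finite point configurations in $\mathcal{H}_d$, and this is the common thread I would exploit.

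For the \textbf{orthoplex/Rankin bound} (ii) I would argue by contradiction using a dimension count. Suppose every off-diagonal inner product satisfies $|\langle \tau_j,\tau_k\rangle|^2 < 1/d$. Passing to the traceless parts $Q_j \coloneqq P_j - I/d$, which live in the $(\mathcal{Z}(d,\mathbb{K})-1)$-dimensional subspace of traceless matrices, one computes $\langle Q_j, Q_k\rangle = \operatorname{tr}(P_jP_k) - 1/d < 0$ for $j\neq k$. Since a real inner product space of dimension $D$ admits at most $D+1$ vectors with pairwise strictly negative inner products, we obtain $n \leq \mathcal{Z}(d,\mathbb{K})$, contradicting $n > \mathcal{Z}(d,\mathbb{K})$; hence some pair realizes $|\langle \tau_j,\tau_k\rangle|\geq 1/\sqrt d$.

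For the \textbf{exponential bound} (iv) I would use a volumetric sphere-packing estimate: if the maximal coherence is $\rho$, then the geodesic caps of the corresponding angular radius about the $\tau_j$ are pairwise disjoint on the unit sphere, so $n$ times the normalized cap measure is at most $1$; bounding the cap measure below and inverting the resulting inequality in $n$ yields $\rho \geq 1 - 2n^{-1/(d-1)}$. For the \textbf{Levenstein bound} (iii) I would invoke the Delsarte linear-programming method on projective space: one expands a suitable test function in the Jacobi (Gegenbauer) polynomials adapted to $\mathbb{K}P^{d-1}$, uses the positive-definiteness of these polynomials on the set of admissible inner-product values, and optimizes the linear program; the stated closed form is the first nontrivial LP bound and is precisely the content of the references cited.

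The hardest part, and the step I expect to be the main obstacle, is the \textbf{Bukh-Cox bound} (i). Here I would pass to the Naimark complement: after scaling $\{\tau_j\}_{j=1}^n$ to a tight frame, the complementary frame consists of $n$ vectors in $\mathbb{K}^{n-d}$, and the coherence in dimension $d$ is controlled by the geometry of this complementary configuration in dimension $n-d$. Applying a sharpened orthoplex/frame-potential estimate to the complement—this is precisely where the Gerzon quantity $\mathcal{Z}(n-d,\mathbb{K})$ and the factor $1+m(n-d-1)\sqrt{m^{-1}+n-d}$ arise—yields the stated inequality. Obtaining these complementary bounds sharply, in place of a lossy Cauchy-Schwarz step, is the delicate point; for it I would follow the argument of Bukh and Cox as recorded in the cited references rather than reconstruct it here.
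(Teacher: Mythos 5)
You should first note what the paper actually does with this statement: nothing. Theorem \ref{LEVENSTEINBOUND} is quoted as known background, with eight citations attached, and no proof (or even proof sketch) appears anywhere in the paper; its only role is to motivate the subsequent question about modular analogues of these four bounds. So there is no ``paper's own proof'' to compare your proposal against, and your write-up must be judged as a standalone reconstruction.

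On those terms it is uneven. Your argument for (ii) is complete and correct: the computation $\langle Q_j,Q_k\rangle=\operatorname{tr}(P_jP_k)-\tfrac{1}{d}=|\langle\tau_j,\tau_k\rangle|^2-\tfrac{1}{d}$ is right, the traceless Hermitian (resp.\ symmetric) matrices do form a real inner-product space of dimension $\mathcal{Z}(d,\mathbb{K})-1$, and the classical fact that at most $D+1$ vectors in $\mathbb{R}^D$ can have pairwise strictly negative inner products closes the contradiction cleanly. The sketch for (iv) is the standard packing heuristic and could be made rigorous with a cap-volume estimate, though you leave that estimate unstated. But for (iii) and (i) you have not given proofs at all: ``expand in Jacobi polynomials and optimize the linear program'' names the Delsarte method without exhibiting the test function or the optimization that produces the specific closed form, and for Bukh--Cox you explicitly defer to the cited paper. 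That is a genuine gap if your text is meant to be a proof; it is, however, exactly the same stance the paper itself takes (citation in place of argument), so your proposal is not in conflict with the source --- it simply goes partway toward making self-contained what the author left entirely to the literature. If you want a complete proof, the two items to supply are the LP test function behind Levenstein's bound and the Naimark-complement inequality of Bukh and Cox; everything else you wrote stands.
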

Theorem \ref{LEVENSTEINBOUND}   leads to the following problem.
\begin{question}
\textbf{Whether there is a  modular  version of Theorem \ref{LEVENSTEINBOUND}?}. In particular, does there exists a  version of 
\begin{enumerate}[\upshape(i)]
	\item \textbf{Bukh-Cox bound for Hilbert C*-modules?}
	\item \textbf{Orthoplex/Rankin bound for Hilbert C*-modules?}
	\item \textbf{Levenstein bound for Hilbert C*-modules?}
	\item \textbf{Exponential bound for Hilbert C*-modules?}
\end{enumerate}		
\end{question}

\section{Applications}
 Throughout this section,  $\mathcal{A}$ is a  commutative $\sigma$-finite W*-algebra or a commutative  AW*-algebra and   $\mathcal{E}$ be a $d$-rank Hilbert C*-module over $\mathcal{A}$.
\begin{definition}\label{CRMSDEFINITION}
Let 	$\{\tau_j\}_{j=1}^n$ be a unit inner product collection  in   $\mathcal{E}$. We define  the \textbf{modular  root-mean-square} (MRMS)  cross relation of 	$\{\tau_j\}_{j=1}^n$   as 	
\begin{align*}
	I_{\text{MRMS}} (\{\tau_j\}_{j=1}^n)\coloneqq \left(\frac{1}{n(n-1)}\sum _{1\leq j,k \leq n, j\neq k}\langle \tau_j, \tau_k\rangle \langle \tau_k, \tau_j\rangle \right)^\frac{1}{2}.
\end{align*}
\end{definition}
Theorem  \ref{ARBITMODULE} now gives the following result.
\begin{proposition}
If 	$\{\tau_j\}_{j=1}^n$ is a unit inner product collection  in   $\mathcal{E}$, then 
	\begin{align*}
1\geq \|	I_{\text{MRMS}} (\{\tau_j\}_{j=1}^n)\|\geq 	I_{\text{MRMS}} (\{\tau_j\}_{j=1}^n)\geq \left(\frac{n-d}{d(n-1)}\right)^\frac{1}{2}.
	\end{align*}
\end{proposition}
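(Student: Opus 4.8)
The plan is to set
\[
a \coloneqq \frac{1}{n(n-1)}\sum_{1\leq j,k\leq n,\, j\neq k}\langle\tau_j,\tau_k\rangle\langle\tau_k,\tau_j\rangle \in \mathcal{A},
\]
so that $I_{\text{MRMS}}(\{\tau_j\}_{j=1}^n)=a^{1/2}$ is exactly the positive square root of $a$. Since $\langle\tau_k,\tau_j\rangle=\langle\tau_j,\tau_k\rangle^*$, each summand is $\langle\tau_j,\tau_k\rangle\langle\tau_j,\tau_k\rangle^*\geq 0$, so $a$ is a nonnegative multiple of a sum of positive elements, hence positive, and $a^{1/2}$ is a well-defined positive element of $\mathcal{A}$. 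Reading the displayed chain correctly, the three inequalities amount to (i) the scalar bound $\|a\|\leq 1$, (ii) the elementary C*-algebra fact that a positive element is dominated by the identity times its norm, and (iii) the order bound $a\geq \frac{n-d}{d(n-1)}\cdot 1$ in $\mathcal{A}$; I would then transport (i) and (iii) through the square root.

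For the top inequality $1\geq\|I_{\text{MRMS}}\|$, I would apply the Cauchy-Schwarz inequality in Hilbert C*-modules, $\langle\tau_j,\tau_k\rangle\langle\tau_k,\tau_j\rangle\leq\|\langle\tau_k,\tau_k\rangle\|\,\langle\tau_j,\tau_j\rangle$, and use the unit inner product hypothesis $\langle\tau_j,\tau_j\rangle=\langle\tau_k,\tau_k\rangle=1$ to see that every off-diagonal summand is $\leq 1$. Summing the $n(n-1)$ terms and dividing gives $0\leq a\leq 1$, whence $\|a\|\leq 1$ and $\|I_{\text{MRMS}}\|=\|a^{1/2}\|=\|a\|^{1/2}\leq 1$. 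The middle inequality $\|I_{\text{MRMS}}\|\geq I_{\text{MRMS}}$ is then just the standard fact that any positive element $b$ of a unital C*-algebra satisfies $b\leq\|b\|\cdot 1$, applied to $b=I_{\text{MRMS}}$.

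The only substantive step is the bottom inequality, where I would invoke Theorem \ref{ARBITMODULE}. Splitting the double sum into its diagonal and off-diagonal parts and using $\langle\tau_j,\tau_j\rangle=1$ gives
\[
\sum_{1\leq j,k\leq n,\, j\neq k}\langle\tau_j,\tau_k\rangle\langle\tau_k,\tau_j\rangle=\sum_{j=1}^n\sum_{k=1}^n\langle\tau_j,\tau_k\rangle\langle\tau_k,\tau_j\rangle-n\geq\frac{n^2}{d}-n=n\cdot\frac{n-d}{d},
\]
the inequality being the $m=1$ case of Theorem \ref{ARBITMODULE}. Dividing by $n(n-1)$ yields $a\geq\frac{n-d}{d(n-1)}\cdot 1$ in the order of $\mathcal{A}$. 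Finally, applying the square root to this operator inequality produces $I_{\text{MRMS}}=a^{1/2}\geq\left(\frac{n-d}{d(n-1)}\right)^{1/2}\cdot 1$, which closes the chain. The point needing the most care — though it is mild — is the consistent interpretation of the chain as a mixture of scalar comparisons and order comparisons in $\mathcal{A}$, together with the passage from operator inequalities to inequalities between their square roots; since $\mathcal{A}$ is commutative this monotonicity is simply the pointwise monotonicity of $t\mapsto\sqrt{t}$ under the Gelfand transform, so no genuine difficulty arises.
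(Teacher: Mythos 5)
Your proof is correct and follows essentially the route the paper intends: the paper offers no written proof beyond the remark that Theorem \ref{ARBITMODULE} gives the result, and your argument is exactly the natural expansion of that remark (the $m=1$ case of Theorem \ref{ARBITMODULE} plus the diagonal/off-diagonal split for the lower bound, Cauchy--Schwarz with the unit inner product hypothesis for the upper bound, and monotonicity of the square root to close the chain). No discrepancy to report.
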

\begin{definition}\label{CONTINUOUSPOTENTIALDEFINITION}
Let 	$\{\tau_j\}_{j=1}^n$ be a unit  inner product collection  in   $\mathcal{E}.$  The \textbf{modular  frame potential} of  	$\{\tau_j\}_{j=1}^n$ is defined as 
 \begin{align*}
 	MFP(\{\tau_j\}_{j=1}^n)\coloneqq	\sum_{j=1}^n\sum_{k=1}^n\langle \tau_j, \tau_k\rangle \langle \tau_k, \tau_j\rangle. 
 \end{align*}	
\end{definition}
Theorem  \ref{ARBITMODULE} again gives the following.
\begin{proposition}\label{FPESTIMATE}
If 	$\{\tau_j\}_{j=1}^n$ is a unit inner product collection  in   $\mathcal{E}$, then 
\begin{align*}
n^2\geq \|MFP(\{\tau_j\}_{j=1}^n)\|\geq 	MFP(\{\tau_j\}_{j=1}^n)\geq \frac{n^2}{{d}}.
\end{align*}	
\end{proposition}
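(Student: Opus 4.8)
The plan is to read the chain of inequalities from right to left, recognising that the rightmost inequality is precisely the first-order bound already established in Theorem~\ref{ARBITMODULE}, while the remaining two are elementary C*-algebraic facts. Throughout I would adopt the convention used tacitly in the preceding theorems, namely that an inequality between an element $a$ of $\mathcal{A}$ and a scalar $c$ is to be read as the order relation $a\geq c\,1_{\mathcal{A}}$ inside the C*-algebra; making this convention explicit at the outset is the one place where a careless reading could go wrong.

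First I would dispose of the lower bound $MFP(\{\tau_j\}_{j=1}^n)\geq \frac{n^2}{d}$. By Definition~\ref{CONTINUOUSPOTENTIALDEFINITION} we have $MFP(\{\tau_j\}_{j=1}^n)=\sum_{j=1}^n\sum_{k=1}^n\langle \tau_j,\tau_k\rangle\langle \tau_k,\tau_j\rangle$, which is exactly the quantity appearing in the ``in particular'' ($m=1$) assertion of Theorem~\ref{ARBITMODULE}. Hence this inequality needs no fresh argument; it is a direct citation.

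Next I would treat the middle inequality $\|MFP(\{\tau_j\}_{j=1}^n)\|\geq MFP(\{\tau_j\}_{j=1}^n)$. Since the inner product is $*$-linear in the second variable, each summand satisfies $\langle \tau_j,\tau_k\rangle\langle \tau_k,\tau_j\rangle=\langle \tau_j,\tau_k\rangle\langle \tau_j,\tau_k\rangle^*\geq 0$, so $MFP(\{\tau_j\}_{j=1}^n)$ is a positive, hence self-adjoint, element of $\mathcal{A}$. The inequality is then the standard fact that every self-adjoint $a$ in a unital C*-algebra obeys $a\leq \|a\|\,1_{\mathcal{A}}$.

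Finally, for the upper norm bound $n^2\geq \|MFP(\{\tau_j\}_{j=1}^n)\|$, I would combine the triangle inequality, the C*-identity, and the Cauchy--Schwarz inequality for Hilbert C*-modules:
\begin{align*}
\|MFP(\{\tau_j\}_{j=1}^n)\|\leq \sum_{j=1}^n\sum_{k=1}^n\|\langle \tau_j,\tau_k\rangle\langle \tau_k,\tau_j\rangle\|=\sum_{j=1}^n\sum_{k=1}^n\|\langle \tau_j,\tau_k\rangle\|^2\leq \sum_{j=1}^n\sum_{k=1}^n\|\tau_j\|^2\|\tau_k\|^2.
\end{align*}
The unit inner product hypothesis gives $\|\tau_j\|^2=\|\langle \tau_j,\tau_j\rangle\|=\|1_{\mathcal{A}}\|=1$, so every term equals $1$ and the double sum is $n^2$. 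Because each of the three steps is either a verbatim application of Theorem~\ref{ARBITMODULE} or a textbook C*-algebra identity, I anticipate no genuine obstacle; the only subtlety worth stressing is the order-versus-scalar convention flagged above, since misreading $MFP\geq n^2/d$ as a numerical rather than an operator inequality would conflate $\|MFP\|$ with $MFP$ and obscure why all four terms of the chain are needed.
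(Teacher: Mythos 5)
Your proposal is correct and matches the paper's approach: the paper offers no written proof beyond the remark that Theorem~\ref{ARBITMODULE} ``again gives'' the result, and your argument simply makes explicit the intended routine steps (the $m=1$ case of Theorem~\ref{ARBITMODULE} for the lower bound, positivity of $\sum_{j,k}\langle\tau_j,\tau_k\rangle\langle\tau_j,\tau_k\rangle^*$ together with $a\leq\|a\|1_{\mathcal{A}}$ for the middle inequality, and the C*-identity with Cauchy--Schwarz for the upper bound). No gaps.
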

	We now recall the definition of frames for Hilbert C*-modules \cite{FRANKLARSON, FRANKLARSON2} and ask a problem. For more on frame theory in Hilbert C*-modules and its remarkable connection with the development of Schatten class and trace class of operators on Hilbert C*-modules, we refer \cite{RAEBURNTHOMPSON, ARAMBASIC, LI, ASADIFRANK, ASADIFRANK2, JINGTHESIS, STERNSUIJLEKOM}. We remark here that we consider only finite frames for finite rank modules.  
	\begin{definition}\cite{FRANKLARSON, FRANKLARSON2}
	Let 	 $\mathcal{E}$ be a Hilbert C*-module over a unital C*-algebra $\mathcal{A}$. A collection $\{\tau_j\}_{j=1}^n$ in  $\mathcal{E}$ is said to be a (modular) \textbf{frame} for  $\mathcal{E}$ if there exist real $a,b>0$ such that 
	\begin{align*}
		a \langle x , x \rangle \leq \sum_{j=1}^n \langle x, \tau_j \rangle \langle  \tau_j, x \rangle  \leq b\langle x , x \rangle, \quad \forall x \in \mathcal{E}.
	\end{align*}
If $a=b$, then we say that the frame is tight and if $a=b=1$, then we say that the frame is Parseval.
	\end{definition}
\begin{question}
\textbf{Is  there  a characterization of  modular frames using modular  frame potential (like Theorem 7.1 in \cite{BENEDETTOFICKUS})?}
\end{question}
\begin{definition}
Let 	$\{\tau_j\}_{j=1}^n$ be a unit  inner product collection  in   $\mathcal{E}.$	We define the  \textbf{modular frame correlation} of 	$\{\tau_j\}_{j=1}^n$ as 
\begin{align*}
	\mathcal{M}(\{\tau_j\}_{j=1}^n)\coloneqq \max _{1\leq j,k \leq n, j\neq k}\|\langle \tau_j, \tau_k\rangle \|.
\end{align*}
\end{definition}

\begin{definition}
  A unit inner product   frame $\{\tau_j\}_{j=1}^n$ for  $\mathcal{E}$ is said to be a \textbf{modular Grassmannian frame} for  $\mathcal{E}$ if 
\begin{align*}
	\mathcal{M}(\{\tau_j\}_{j=1}^n)=\inf\left\{\mathcal{M}(\{\omega_j\}_{j=1}^n):\{\omega_j\}_{j=1}^n\text{ is a unit inner product  frame for }\mathcal{E} \right\}.	
\end{align*}		
\end{definition}
  It is known, using   compactness and continuity arguments that Grassmannian frames exist in every dimension in Hilbert spaces \cite{BENEDETTONKOLESAR}.  We can not use this argument for Hilbert C*-modules. Hence we have following problem. 
\begin{question}
	\textbf{Does every finite rank Hilbert C*-module  admits a modular Grassmannian frame?}
\end{question}
\begin{definition}
 A modular  frame $\{\tau_j\}_{j=1}^n$ for  $\mathcal{E}$ is said to be 	\textbf{$\gamma$-equiangular} if there exists a positive element  $\gamma\geq0$ in the C*-algebra such that
\begin{align*}
\langle\tau_j, \tau_k \rangle\langle\tau_k, \tau_j \rangle=\gamma, \quad \forall 	1\leq j , k \leq n , j\neq k.
\end{align*}
\end{definition}
We now formulate Zauner's conjecture for Hilbert C*-modules (see \cite{APPLEBY, ZAUNER, FUCHSHOANGSTACEY, RENESBLUME, KOPP, APPLEBY123, BENGTSSON2, MAGSINO} for the Zauner's conjecture in Hilbert spaces and its connections with Hilbert's 12-problem and Stark conjecture). 
\begin{conjecture}
	(\textbf{Modular Zauner's conjecture}) \textbf{Let $\mathcal{A}$ be a unital C*-algebra. For    every $d\in \mathbb{N}$, there exists a $\frac{1}{d+1}$-equiangular  unit inner product frame $\{\tau_j\}_{j=1}^{d^2}$  for $\mathcal{A}^d$, i.e., there exists  a tight frame  $\{\tau_j\}_{j=1}^{d^2}$  for  $\mathcal{A}^d$ satisfying $\langle \tau_j, \tau_j\rangle=1$, $\forall 1\leq j \leq n$, and 
	\begin{align*}
	\langle \tau_j, \tau_k\rangle	\langle\tau_k, \tau_j \rangle=\frac{1}{d+1}, \quad 	1\leq j , k \leq d^2 , j\neq k.
	\end{align*}}
\end{conjecture}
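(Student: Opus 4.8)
The plan is to reduce the Modular Zauner's conjecture to the classical Zauner (SIC-POVM) conjecture in $\mathbb{C}^d$ by pushing a Hilbert-space SIC into $\mathcal{A}^d$ through the canonical unital embedding $\mathbb{C}\hookrightarrow\mathcal{A}$, $\lambda\mapsto\lambda 1$. First I would recall that a classical SIC is a collection $\{v_j\}_{j=1}^{d^2}$ of unit vectors in $\mathbb{C}^d$ with $|\langle v_j,v_k\rangle|^2=\tfrac{1}{d+1}$ for $j\neq k$, and that such a system is automatically a tight frame for $\mathbb{C}^d$ with frame bound $d$. Writing $v_j=(v_j^{(1)},\dots,v_j^{(d)})$, I would then set $\tau_j\coloneqq(v_j^{(1)}1,\dots,v_j^{(d)}1)\in\mathcal{A}^d$, so that the conjectured modular family is built entirely from scalar multiples of the unit of $\mathcal{A}$.

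The second step is a direct verification that $\{\tau_j\}_{j=1}^{d^2}$ has all the required modular properties. Since the standard module inner product restricts to scalars on these vectors, I would compute $\langle\tau_j,\tau_k\rangle=\big(\sum_{r=1}^d v_j^{(r)}\overline{v_k^{(r)}}\big)1=\langle v_j,v_k\rangle 1$. This immediately yields $\langle\tau_j,\tau_j\rangle=1$ (unit inner product) and $\langle\tau_j,\tau_k\rangle\langle\tau_k,\tau_j\rangle=|\langle v_j,v_k\rangle|^2 1=\tfrac{1}{d+1}$ for $j\neq k$, so the family is $\tfrac{1}{d+1}$-equiangular in the sense of the preceding definition, with the positive element $\gamma=\tfrac{1}{d+1}1$. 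For the tight-frame property I would evaluate the frame homomorphism $S_\tau x=\sum_j\langle x,\tau_j\rangle\tau_j$ componentwise and invoke the classical tightness relation $\sum_j v_j^{(s)}\overline{v_j^{(r)}}=d\,\delta_{sr}$ to conclude $S_\tau=d\,\mathrm{Id}$, so $\{\tau_j\}$ is indeed a tight frame for $\mathcal{A}^d$.

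The converse is immediate: specializing to $\mathcal{A}=\mathbb{C}$ turns the modular statement into the classical one verbatim, so the universally quantified modular conjecture is in fact equivalent to the scalar one. This is precisely where the main obstacle lies. The reduction above is elementary, but it transfers the entire difficulty to the classical Zauner conjecture, which asserts the existence of SICs in $\mathbb{C}^d$ for every $d$ and is a well-known open problem, established only in finitely many dimensions and supported by extensive numerical evidence. Consequently I do not expect an unconditional proof; the realistic deliverable is the conditional statement that the modular conjecture holds for every unital C*-algebra whenever the classical conjecture holds in the corresponding dimension. Any hope of circumventing this — for instance exploiting a richer commutative $\mathcal{A}=C(X)$ to produce equiangular systems that do not descend from constant (scalar) sections — would require a construction with genuinely $X$-dependent inner products, and I see no mechanism that would force the equiangularity constant to stay equal to the scalar $\tfrac{1}{d+1}$ fibrewise, so I do not expect the C*-algebraic setting to be any easier than the scalar one.
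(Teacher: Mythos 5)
This statement is labelled a \emph{conjecture} in the paper, and the paper supplies no proof of it; there is nothing to compare your argument against on the paper's side. What you have written is therefore correctly self-diagnosed: it is not a proof but a reduction. Your embedding $\lambda\mapsto\lambda 1$ is sound as far as it goes. Given a classical SIC $\{v_j\}_{j=1}^{d^2}\subset\mathbb{C}^d$, the vectors $\tau_j=(v_j^{(1)}1,\dots,v_j^{(d)}1)$ do satisfy $\langle\tau_j,\tau_k\rangle=\langle v_j,v_k\rangle 1$ under the paper's convention (linear in the first variable, $*$-linear in the second), hence $\langle\tau_j,\tau_j\rangle=1$ and $\langle\tau_j,\tau_k\rangle\langle\tau_k,\tau_j\rangle=\frac{1}{d+1}$ for $j\neq k$; and the componentwise computation $S_\tau=d\,\mathrm{Id}$ via $\sum_j v_j^{(s)}\overline{v_j^{(r)}}=d\,\delta_{sr}$ (which holds because a SIC meets the frame-potential bound with equality and is therefore tight) gives the tight-frame property in $\mathcal{A}^d$. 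So the modular conjecture for every unital $\mathcal{A}$ is implied by the classical Zauner conjecture, and specializing to $\mathcal{A}=\mathbb{C}$ shows the two are equivalent.

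The gap is simply that the classical Zauner conjecture is open, so no unconditional proof results; you say as much yourself. Two remarks on what your reduction does and does not deliver. First, it does yield unconditional instances: in every dimension $d$ where a SIC is explicitly known (e.g.\ the dimensions with exact constructions in the literature), the modular conjecture holds for \emph{every} unital C*-algebra, which is a genuine, citable consequence and arguably worth recording next to the conjecture. Second, your closing observation is the right one to press on: the only conceivable advantage of the C*-module setting is a construction with non-scalar inner products, say over $\mathcal{A}=C(X)$, where the equiangularity condition $\langle\tau_j,\tau_k\rangle\langle\tau_k,\tau_j\rangle=\frac{1}{d+1}$ forces $|\langle\tau_j(x),\tau_k(x)\rangle|^2=\frac{1}{d+1}$ at every point $x\in X$, i.e.\ a continuous family of SICs fibrewise. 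That is at least as hard as the scalar problem, so the reduction direction you chose is the only plausible one, and the conjecture remains exactly as open as its scalar ancestor.
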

\begin{conjecture}
	(\textbf{Modular Zauner's conjecture- strong form}) \textbf{Let $\mathcal{A}$ be a unital C*-algebra with invariant basis number property or a W*-algebra. For    every $d\in \mathbb{N}$, there exists a $\frac{1}{d+1}$-equiangular  unit inner product frame $\{\tau_j\}_{j=1}^{d^2}$  for $\mathcal{A}^d$, i.e., there exists  a tight frame  $\{\tau_j\}_{j=1}^{d^2}$  for  $\mathcal{A}^d$ satisfying $\langle \tau_j, \tau_j\rangle=1$, $\forall 1\leq j \leq n$, and 
		\begin{align*}
			\langle \tau_j, \tau_k\rangle	\langle\tau_k, \tau_j \rangle=\frac{1}{d+1}, \quad 	1\leq j , k \leq d^2 , j\neq k.
	\end{align*}}
\end{conjecture}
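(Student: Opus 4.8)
The plan is to reduce the conjecture to the classical existence problem for symmetric informationally complete configurations (the classical Zauner conjecture), by realizing modular SICs as constant scalar sections of classical ones and, conversely, by pulling modular SICs back along one-dimensional representations. First I would fix a classical SIC for $\mathbb{C}^d$: a family $\{v_j\}_{j=1}^{d^2}$ of unit vectors with $|\langle v_j,v_k\rangle|^2=\tfrac{1}{d+1}$ for $j\neq k$. Since $n=d^2$ saturates Gerzon's bound and $\tfrac{1}{\sqrt{d+1}}$ is exactly the first order Welch value, such a family automatically forms an equiangular tight frame, so ``classical SIC'' and ``$\tfrac{1}{d+1}$-equiangular unit tight frame of $d^2$ vectors'' coincide.

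Next I would transport this into the module. Writing $v_j=(v_j^{(1)},\dots,v_j^{(d)})$, set the constant section $\tau_j\coloneqq(v_j^{(1)}1,\dots,v_j^{(d)}1)\in\mathcal{A}^d$, where $1$ is the unit of $\mathcal{A}$. Every entry is a scalar multiple of the identity, hence central, so the module inner product collapses to the scalar one, $\langle\tau_j,\tau_k\rangle=\langle v_j,v_k\rangle\,1$. The three required properties then follow coordinatewise: $\langle\tau_j,\tau_j\rangle=\|v_j\|^2\,1=1$; $\langle\tau_j,\tau_k\rangle\langle\tau_k,\tau_j\rangle=|\langle v_j,v_k\rangle|^2\,1=\tfrac{1}{d+1}$; and the classical tightness identity $\sum_j v_j^{(s)}\overline{v_j^{(r)}}=d\,\delta_{rs}$ gives $\sum_j\langle x,\tau_j\rangle\langle\tau_j,x\rangle=d\langle x,x\rangle$ for all $x\in\mathcal{A}^d$. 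Thus $\{\tau_j\}$ is a $\tfrac{1}{d+1}$-equiangular unit inner product tight frame, and the argument uses only centrality of the entries, so it is valid for noncommutative $\mathcal{A}$ as well.

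For a genuine equivalence I would run the converse: given a hypothetical modular SIC, pull it back along a unital $*$-homomorphism $\phi\colon\mathcal{A}\to\mathbb{C}$, putting $v_j\coloneqq(\phi((\tau_j)_1),\dots,\phi((\tau_j)_d))$. Multiplicativity of $\phi$ yields $\|v_j\|^2=\phi(1)=1$ and $|\langle v_j,v_k\rangle|^2=\phi(\langle\tau_j,\tau_k\rangle\langle\tau_k,\tau_j\rangle)=\tfrac{1}{d+1}$, so the $v_j$ are $d^2$ equiangular unit vectors meeting the Welch bound, i.e. a classical SIC. In the commutative case characters always exist (Gelfand duality), so the modular conjecture is exactly equivalent to the classical one; in the noncommutative case a simple algebra such as $M_2(\mathbb{C})$ has no characters, and one can only compress along an irreducible representation, which does not land in $\mathbb{C}$, so the reduction there is one-way.

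The main obstacle is hence located entirely in the first step: producing a classical SIC in $\mathbb{C}^d$ is the classical Zauner conjecture, established rigorously only in a growing but finite list of dimensions (with numerical solutions in every dimension tested). Consequently the honest output of this plan is a reduction rather than an unconditional theorem: the modular conjecture holds in every dimension admitting a known classical SIC, and in the commutative setting it is equivalent to the classical conjecture, so a complete proof should not be expected before the classical case is settled. The only additional, genuinely modular, difficulty arises if one seeks noncommutative SICs with no classical shadow over an algebra lacking characters; deciding whether such objects exist would require controlling the global geometry of the bundle of SIC fiducials, which is not understood even classically.
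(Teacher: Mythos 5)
The statement you are asked about is not a theorem of the paper but an open conjecture; the paper supplies no proof of it, and your proposal does not supply one either. What you have written is a \emph{reduction}: the constant-section construction $\tau_j=(v_j^{(1)}1,\dots,v_j^{(d)}1)$ correctly transports a classical SIC in $\mathbb{C}^d$ to a $\tfrac{1}{d+1}$-equiangular unit inner product tight frame for $\mathcal{A}^d$ (the inner-product, equiangularity and tightness computations all check out, and centrality of scalar multiples of $1$ does make them valid for noncommutative $\mathcal{A}$), and the pull-back along a character correctly recovers a classical SIC in the commutative case. But the hypothesis you feed into the first step --- the existence of a classical SIC in every dimension $d$ --- is precisely Zauner's conjecture, which is unproved. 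So the conclusion of your argument is ``modular Zauner holds in every dimension where classical Zauner holds, and for commutative $\mathcal{A}$ the two are equivalent,'' which you acknowledge yourself. That is a legitimate and even useful observation (it shows the modular conjecture is no harder than the classical one in the commutative setting, and that the invariant-basis-number or W*-hypothesis in the ``strong form'' plays no role in your construction), but it cannot be presented as a proof of the conjecture, and it should be recorded as a conditional statement or a remark rather than as a resolution.

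One further point worth flagging: your equivalence claim is confined to algebras admitting characters. The conjecture as stated ranges over all unital C*-algebras with the invariant basis number property and all W*-algebras, including simple noncommutative ones such as $M_2(\mathbb{C})$, where your converse direction breaks down. So even granting the classical Zauner conjecture, your argument establishes existence over every such $\mathcal{A}$ (the forward direction needs no commutativity), but the ``genuinely modular'' content of the conjecture --- whether there could be obstructions, or conversely exotic SIC-like configurations, over algebras with no one-dimensional representations --- is untouched, as you note at the end.
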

\begin{theorem}\label{CNG2}
Let  $\{\tau_j\}_{j=1}^n$ be a unit inner product frame  for  $\mathcal{E}$. Then 
\begin{align}\label{EQUIANGULARINEQUALITY}
		\mathcal{M}(\{\tau_j\}_{j=1}^n)\geq \sqrt{\frac{n-d}{d(n-1)}}\eqqcolon\gamma.
\end{align}
If the frame is $\gamma$-equiangular, then we have equality in Inequality (\ref{EQUIANGULARINEQUALITY}).
\end{theorem}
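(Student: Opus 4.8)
The plan is to obtain the bound directly from the first-order modular Welch bound and to read off the equality case from the C*-identity. For Inequality \eqref{EQUIANGULARINEQUALITY}, since $\mathcal{E}$ is a rank-$d$ module over a commutative $\sigma$-finite W*-algebra or a commutative AW*-algebra, Theorem \ref{ARBITMODULE} applies with $m=1$ and gives $\max_{1\leq j,k\leq n,\,j\neq k}\|\langle\tau_j,\tau_k\rangle\|^2\geq\frac{n-d}{d(n-1)}$. As $\mathcal{M}(\{\tau_j\}_{j=1}^n)=\max_{1\leq j,k\leq n,\,j\neq k}\|\langle\tau_j,\tau_k\rangle\|$ and the square of a maximum of nonnegative reals equals the maximum of the squares, this reads $\mathcal{M}(\{\tau_j\}_{j=1}^n)^2\geq\gamma^2$, hence $\mathcal{M}(\{\tau_j\}_{j=1}^n)\geq\gamma$.

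For the equality statement I would use only the C*-identity. If the frame is $\gamma$-equiangular, there is a positive element $g$ with $\langle\tau_j,\tau_k\rangle\langle\tau_k,\tau_j\rangle=g$ for all $j\neq k$; since $\langle\tau_k,\tau_j\rangle=\langle\tau_j,\tau_k\rangle^*$, the identity $\|aa^*\|=\|a\|^2$ gives $\|\langle\tau_j,\tau_k\rangle\|^2=\|g\|$ for every such pair, so all off-diagonal norms coincide and $\mathcal{M}(\{\tau_j\}_{j=1}^n)=\sqrt{\|g\|}$. For a frame equiangular at the Welch threshold the common element is $g=\gamma^2\cdot 1=\frac{n-d}{d(n-1)}\cdot 1$, whence $\|g\|=\gamma^2$ and equality holds in \eqref{EQUIANGULARINEQUALITY}. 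To confirm that this is the value actually forced I would compute the modular frame potential, splitting diagonal from off-diagonal terms and using the unit-inner-product hypothesis:
\begin{align*}
\sum_{j=1}^n\sum_{k=1}^n\langle\tau_j,\tau_k\rangle\langle\tau_k,\tau_j\rangle = n\cdot 1 + n(n-1)\,g,
\end{align*}
which Theorem \ref{TRACETHEOREM} identifies with $\operatorname{Tra}(S_\tau^2)$.

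The proof is therefore largely a corollary of the two preceding theorems, and the only real care needed is that $g$ is a positive element of the commutative algebra rather than a scalar: the passage to $\mathcal{M}(\{\tau_j\}_{j=1}^n)=\sqrt{\|g\|}$ rests on the C*-identity $\|aa^*\|=\|a\|^2$ and not on the element identity $\langle\tau_j,\tau_k\rangle\langle\tau_k,\tau_j\rangle=g$ alone. As a byproduct, substituting $g=\gamma^2\cdot 1$ into the displayed frame-potential identity yields $\operatorname{Tra}(S_\tau^2)=\frac{n^2}{d}\cdot 1$, which is the equality case of Proposition \ref{FPESTIMATE}; via the Cauchy–Schwarz step $(\operatorname{Tra}S_\tau)^2\leq d\operatorname{Tra}(S_\tau^2)$ from the proof of Theorem \ref{ARBITMODULE} this forces equality there as well, so such an equiangular frame is automatically tight with frame homomorphism $S_\tau=\frac{n}{d}\,\mathrm{Id}$. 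The main conceptual point is thus not a difficult estimate but the discipline of performing every manipulation among positive elements—using $a\leq\|a\|\cdot 1$ for positive $a$ together with the C*-identity—before extracting the numerical statement about $\mathcal{M}(\{\tau_j\}_{j=1}^n)$.
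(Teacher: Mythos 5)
The paper states Theorem \ref{CNG2} without any proof, so there is nothing to compare against line by line; your derivation is the evidently intended one, namely reading the inequality off the first-order modular Welch bound of Theorem \ref{ARBITMODULE} with $m=1$ and taking square roots, and that part is correct (note only that the frame hypothesis is not actually used for the inequality -- unit inner product and $n\geq d$ suffice). The one place where you have quietly repaired the statement rather than proved it as written is the equality case: under the paper's own definition, a frame is $\gamma$-equiangular when $\langle\tau_j,\tau_k\rangle\langle\tau_k,\tau_j\rangle=\gamma$ for $j\neq k$, so taking $\gamma=\sqrt{\tfrac{n-d}{d(n-1)}}$ literally would give $\|\langle\tau_j,\tau_k\rangle\|^2=\gamma$ and hence $\mathcal{M}(\{\tau_j\}_{j=1}^n)=\gamma^{1/2}$, which equals $\gamma$ only when $\gamma\in\{0,1\}$. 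You instead take the common value to be $g=\gamma^2\cdot 1=\tfrac{n-d}{d(n-1)}\cdot 1$, which is the reading that makes the equality claim true and is consistent with the Hilbert-space convention for equiangular tight frames; this discrepancy should be stated explicitly rather than passed over. Your use of the C*-identity $\|aa^*\|=\|a\|^2$ to convert the element identity into a statement about norms is exactly the right care to take, and the byproduct computation ($MFP=n+n(n-1)\|g\|=\tfrac{n^2}{d}$, forcing equality in the Cauchy--Schwarz step and tightness $S_\tau=\tfrac{n}{d}\,\mathrm{Id}$) is correct and adds content the paper does not record.
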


\begin{remark}
\textbf{Using the Kasparov theory of integration in Hilbert C*-modules \cite{KASPAROVTO, TROITSKY}    over Lie groups \cite{BUMP, DUISTERMAAT}, we can derive results obtained in \cite{MAHESHKRISHNA} for Hilbert C*-modules (in a similar line) whenever the measure space is a Lie group and the collection is a modular continuous Bessel family defined as follows.}
\end{remark}
\begin{definition}\label{CONTINUOUSMODULARWELCH}
Let 	$(G, \mu_G)$ be a Lie group with (left) Haar measure. A collection   $\{\tau_g\}_{g \in G}$ in a Hilbert C*-module  $\mathcal{E}$ over $\mathcal{A}$ is said to be a \textbf{continuous modular frame} for $\mathcal{E}$ if the following conditions hold.
\begin{enumerate}[\upshape(i)]
	\item For each $x \in \mathcal{E}$, the map $G \ni g \mapsto \langle x, \tau_g \rangle \in \mathcal{A}$ is continuous.
	\item There are real $a,b>0$ such that 
	\begin{align*}
		a\langle x, x \rangle\leq \int_{G}\langle x, \tau_g \rangle \langle  \tau_g, x \rangle\,d\mu_G(g)\leq b \langle x, x \rangle , \quad \forall x \in \mathcal{E}.
	\end{align*}
\end{enumerate}
 If we do not demand the first inequality in (ii), then we say $\{\tau_g\}_{g \in G}$ is a \textbf{continuous modular Bessel family} for $\mathcal{E}$. 	
\end{definition}
Rather working with Definition \ref{CONTINUOUSMODULARWELCH}, we can also work with the following definition which is equivalent to Definition \ref{CONTINUOUSMODULARWELCH} (we can prove this using Theorem 2.8 in  \cite{PASCHKE}).
\begin{definition}
Let 	$(G, \mu_G)$ be a Lie group with (left) Haar measure. A collection   $\{\tau_g\}_{g \in G}$ in a Hilbert C*-module  $\mathcal{E}$ over $\mathcal{A}$ is said to be a \textbf{continuous modular frame} for $\mathcal{E}$ if the following conditions hold.
\begin{enumerate}[\upshape(i)]
	\item For each $x \in \mathcal{E}$, the map $G \ni g \mapsto \langle x, \tau_g \rangle \in \mathcal{A}$ is continuous.
	\item There are real $a,b>0$ such that 
	\begin{align*}
		a\|x\|^2\leq \left\|\int_{G}\langle x, \tau_g \rangle \langle  \tau_g, x \rangle\,d\mu_G(g)\right\|\leq b \|x\|^2, \quad \forall x \in \mathcal{E}.
	\end{align*}
\end{enumerate}
If we do not demand the first inequality in (ii), then we say $\{\tau_g\}_{g \in G}$ is a \textbf{continuous modular Bessel family} for $\mathcal{E}$. 		
\end{definition}
Here we list some sample  results, conjectures and concepts done    in \cite{MAHESHKRISHNA}  for Hilbert C*-modules. In the remaining part, $(G, \mu_G)$ is  a  Lie group with Haar measure.
\begin{theorem}(\textbf{Continuous modular Welch bounds})
Let $\mathcal{A}$ be a commutative $\sigma$-finite W*-algebra or a commutative  AW*-algebra. Let $\mathcal{E}$ be  a Hilbert C*-module over $\mathcal{A}$ of rank $d$. 		Let $(G, \mu_G)$ be a  Lie group  and $\{\tau_g\}_{g\in G}$ be a 	 continuous modular Bessel  family for $\mathcal{E}$ such that $\langle \tau_g, \tau_g \rangle =1, \forall g \in G$. If the diagonal $\Delta\coloneqq \{(g, g):g \in G\}$ is measurable in  $G\times G$, then 

	\begin{align*}
		\int_{G\times G}\|\langle \tau_g, \tau_h\rangle\|^{2m}\, d(\mu_G\times\mu_G)(g,h)&=	\int_{G}\int_{G}\|\langle \tau_g, \tau_h\rangle\|^{2m}\, d \mu_G(g)\, d \mu_G(h)\\
		&\geq	\int_{G\times G}\langle \tau_g, \tau_h\rangle^{m}\langle \tau_h, \tau_g\rangle^{m}\, d(\mu_G\times\mu_G)(g,h)\\
		&=	\int_{G}\int_{G}\langle \tau_g, \tau_h\rangle^{m}\langle \tau_h, \tau_g\rangle^{m}\, d \mu_G(g)\, d \mu_G(h)\geq \frac{\mu_G(G)^2}{{d+m-1\choose m}}, \quad \forall m \in \mathbb{N}.
	\end{align*}	
In particular, 
\begin{align*}
		\int_{G\times G}\|\langle \tau_g, \tau_h\rangle\|^{2}\, d(\mu_G\times\mu_G)(g,h)&=	\int_{G}\int_{G}\|\langle \tau_g, \tau_h\rangle\|^{2}\, d \mu_G(g)\, d \mu_G(h)\\
	&\geq	\int_{G\times G}\langle \tau_g, \tau_h\rangle\langle \tau_h, \tau_g\rangle\, d(\mu_G\times\mu_G)(g,h)\\
	&=	\int_{G}\int_{G}\langle \tau_g, \tau_h\rangle\langle \tau_h, \tau_g\rangle\, d \mu_G(g)\, d \mu_G(h)\geq \frac{\mu_G(G)^2}{d}.
\end{align*}
	Further, we have the \textbf{higher order continuous modular Welch bounds} 
	\begin{align*}
		\sup _{g, h \in G, g\neq h}\|\langle \tau_g, \tau_h\rangle \|^{2m}\geq \frac{1}{(\mu_G\times\mu_G)((G\times G)\setminus\Delta)}	\left[\frac{	\mu_G(G)^2}{{d+m-1 \choose m}}-(\mu_G\times\mu_G)(\Delta)\right],  \quad \forall m \in \mathbb{N}.
	\end{align*}
	In particular, we have the \textbf{first order continuous modular Welch bound} 
	\begin{align*}
		\sup _{g, h \in G, g\neq h}\|\langle \tau_g, \tau_h\rangle \|^{2}\geq \frac{1}{(\mu_G\times\mu_G)((G\times G)\setminus\Delta)}\left[\frac{\mu_G(G)^2}{d}-(\mu_G\times\mu_G)(\Delta)\right].
	\end{align*}
\end{theorem}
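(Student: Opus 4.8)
The plan is to reproduce the proof of Theorem~\ref{ARBITMODULE} with finite sums replaced by integrals against the Haar measure $\mu_G$, invoking the Kasparov integration theory for Hilbert C*-modules so that the module-valued integrals converge and behave functorially. First I would pass to the symmetric tensor power $\text{Sym}^m(\mathcal{E})$ and introduce the continuous frame homomorphism
\begin{align*}
S_\tau : \text{Sym}^m(\mathcal{E}) \ni x \longmapsto \int_G \langle x, \tau_g^{\otimes m} \rangle\, \tau_g^{\otimes m}\, d\mu_G(g) \in \text{Sym}^m(\mathcal{E}).
\end{align*}
The Bessel hypothesis, together with $\langle \tau_g^{\otimes m}, \tau_g^{\otimes m}\rangle = \langle \tau_g, \tau_g\rangle^m = 1$, is what I would use to show that $S_\tau$ is a well-defined bounded positive adjointable homomorphism. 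Because $\mathcal{A}$ is a commutative $\sigma$-finite W*-algebra or commutative AW*-algebra, Theorem~\ref{SPECTRALTHEOREM} diagonalizes $S_\tau$ with eigenvalues $\lambda_1, \dots, \lambda_{\operatorname{rank}(\text{Sym}^m(\mathcal{E}))}$, and Theorem~\ref{SYMMETRICTENSORDIMENSIONMODULE} identifies $\operatorname{rank}(\text{Sym}^m(\mathcal{E})) = {d+m-1 \choose m}$.

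Next I would establish the continuous analogues of the trace identities of Theorem~\ref{TRACETHEOREM}. Using the unit inner product hypothesis, the first identity becomes
\begin{align*}
\operatorname{Tra}(S_\tau) = \int_G \langle \tau_g^{\otimes m}, \tau_g^{\otimes m}\rangle\, d\mu_G(g) = \int_G \langle \tau_g, \tau_g\rangle^m\, d\mu_G(g) = \mu_G(G),
\end{align*}
and the second becomes
\begin{align*}
\operatorname{Tra}(S_\tau^2) = \int_G \int_G \langle \tau_g, \tau_h\rangle^m \langle \tau_h, \tau_g\rangle^m\, d\mu_G(g)\, d\mu_G(h).
\end{align*}
Applying Cauchy-Schwarz to the eigenvalues exactly as in Theorem~\ref{ARBITMODULE}, namely $(\operatorname{Tra}(S_\tau))^2 \leq \operatorname{rank}(\text{Sym}^m(\mathcal{E}))\, \operatorname{Tra}(S_\tau^2)$, produces
\begin{align*}
\frac{\mu_G(G)^2}{{d+m-1 \choose m}} \leq \int_G \int_G \langle \tau_g, \tau_h\rangle^m \langle \tau_h, \tau_g\rangle^m\, d\mu_G(g)\, d\mu_G(h),
\end{align*}
which is the central estimate. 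The pointwise inequality $\langle \tau_g, \tau_h\rangle^m \langle \tau_h, \tau_g\rangle^m \leq \|\langle \tau_g, \tau_h\rangle\|^{2m}$, valid because the left side is a positive element dominated by its norm times the identity, then yields the remaining inequality in the first displayed chain, and the passage between iterated and product integrals is Fubini's theorem on the measurable product $G \times G$.

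For the higher order bound I would split the double integral along the diagonal $\Delta$, whose measurability is assumed. On $\Delta$ the integrand is $\langle \tau_g, \tau_g\rangle^{2m} = 1$, contributing $(\mu_G \times \mu_G)(\Delta)$; off the diagonal I would dominate the integrand by $\sup_{g \neq h} \|\langle \tau_g, \tau_h\rangle\|^{2m}$ and integrate, picking up a factor $(\mu_G \times \mu_G)((G \times G) \setminus \Delta)$. Solving the resulting inequality for the supremum gives the stated higher order continuous modular Welch bound, the first order case being $m = 1$. The main obstacle I anticipate is not algebraic but analytic: verifying that the module-valued integral defining $S_\tau$ converges and is genuinely adjointable, and that the operations of taking traces, integrating, and spectrally decomposing may legitimately be interchanged. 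This is precisely the role of the Kasparov integration theory for Hilbert C*-modules over Lie groups referenced in the preceding remark; once that machinery is available, the algebraic skeleton of the argument is identical to the discrete proof of Theorem~\ref{ARBITMODULE}.
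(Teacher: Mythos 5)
Your proposal matches the paper's intended argument: the paper gives no written proof of this theorem, but the remark preceding it states that the results are obtained by rerunning the proof of Theorem \ref{ARBITMODULE} ``in a similar line'' with sums replaced by Haar integrals via Kasparov's integration theory, which is exactly what you do (symmetric power, continuous frame homomorphism, spectral theorem, trace identities, Cauchy--Schwarz on the eigenvalues, and the diagonal splitting). The analytic points you flag --- well-definedness and adjointability of the integral operator and the Bessel property of $\{\tau_g^{\otimes m}\}$ --- are precisely the ones the paper delegates to that machinery, so your write-up is faithful to the paper's route.
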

\begin{question}
	\textbf{Whether there is a  continuous modular  version of Theorem \ref{LEVENSTEINBOUND} for frames indexed by Lie groups}. In particular, does there exists a continuous  version of 
	\begin{enumerate}[\upshape(i)]
		\item \textbf{Bukh-Cox bound for Hilbert C*-modules?}
		\item \textbf{Orthoplex/Rankin bound for Hilbert C*-modules?}
		\item \textbf{Levenstein bound for Hilbert C*-modules?}
		\item \textbf{Exponential bound for Hilbert C*-modules?}
	\end{enumerate}		
\end{question}
\begin{definition}\label{12}
	Let  $\{\tau_g\}_{g\in G}$ be a 	unit inner product  continuous Bessel  family for  $\mathcal{E}$. If the diagonal $\Delta$ is measurable, then the \textbf{continuous modular root-mean-square} (CDRMS) absolute cross relation of $\{\tau_g\}_{g\in G}$  is defined as 	
	\begin{align*}
		I_{\text{CDRMS}}(\{\tau_g\}_{g\in G})\coloneqq	\left(\frac{1}{(\mu_G\times\mu_G)((G\times G)\setminus\Delta)}\int_{(G\times G)\setminus\Delta}\langle \tau_g, \tau_h\rangle\langle \tau_h, \tau_g\rangle\, d(\mu_G\times\mu_G)(g,h)\right)^\frac{1}{2}.
	\end{align*}
\end{definition}
\begin{proposition}
	Under the set up as in Definition \ref{12}, one has 
	\begin{align*}
		1\geq I_{\text{CDRMS}}(\{\tau_g\}_{g\in G})		\geq \left(\frac{1}{(\mu_G\times\mu_G)((G\times G)\setminus\Delta)}\left[\frac{\mu_G(  G)^2}{d}-(\mu_G\times\mu_G)(\Delta)\right]\right)^\frac{1}{2}.
	\end{align*}
\end{proposition}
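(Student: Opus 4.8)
The plan is to read the proposition off the first-order continuous modular Welch bound (the $m=1$ case of the preceding Continuous modular Welch bounds theorem) for the lower estimate, and off the Cauchy--Schwarz inequality in Hilbert C*-modules for the upper estimate. A convenient reduction comes first: set
\begin{align*}
P\coloneqq \frac{1}{(\mu_G\times\mu_G)((G\times G)\setminus\Delta)}\int_{(G\times G)\setminus\Delta}\langle \tau_g, \tau_h\rangle\langle \tau_h, \tau_g\rangle\, d(\mu_G\times\mu_G)(g,h),
\end{align*}
a positive element of $\mathcal{A}$, so that $I_{\text{CDRMS}}(\{\tau_g\}_{g\in G})=P^{1/2}$. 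Writing $c$ for the scalar appearing under the square root on the right, the two claimed inequalities are $P^{1/2}\le 1$ and $P^{1/2}\ge c^{1/2}\cdot 1$. Since $\mathcal{A}$ is commutative these elements all commute and the order is the pointwise Gelfand order, under which the square root is monotone; hence it suffices to prove $P\le 1$ and $P\ge c\cdot 1$.

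For $P\le 1$ I would invoke the Cauchy--Schwarz inequality in Hilbert C*-modules, $\langle \tau_g, \tau_h\rangle\langle \tau_h, \tau_g\rangle\le \|\langle \tau_h,\tau_h\rangle\|\,\langle \tau_g,\tau_g\rangle$, at each pair $(g,h)$. The unit inner product hypothesis gives $\langle \tau_h,\tau_h\rangle=\langle \tau_g,\tau_g\rangle=1$, so the integrand is dominated by the constant $1$ everywhere off the diagonal. Integrating this pointwise order inequality over $(G\times G)\setminus\Delta$ and dividing by the product measure of that set yields $P\le 1$, i.e. $I_{\text{CDRMS}}\le 1$.

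For $P\ge c\cdot 1$ I would begin from the first-order continuous modular Welch bound $\int_G\int_G\langle \tau_g, \tau_h\rangle\langle \tau_h, \tau_g\rangle\, d\mu_G(g)\,d\mu_G(h)\ge \mu_G(G)^2/d$, and split the integral over $G\times G$ into its parts on $\Delta$ and on $(G\times G)\setminus\Delta$ (legitimate since $\Delta$ is assumed measurable). On the diagonal $g=h$ the unit inner product forces $\langle \tau_g, \tau_h\rangle\langle \tau_h, \tau_g\rangle=\langle \tau_g,\tau_g\rangle^2=1$, so the diagonal contributes exactly $(\mu_G\times\mu_G)(\Delta)\cdot 1$. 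Transposing this term and dividing by $(\mu_G\times\mu_G)((G\times G)\setminus\Delta)$ gives $P\ge c\cdot 1$ with $c=\big(\mu_G(G)^2/d-(\mu_G\times\mu_G)(\Delta)\big)/(\mu_G\times\mu_G)((G\times G)\setminus\Delta)$; taking square roots produces precisely the stated lower bound.

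The only points needing care are measure-theoretic: that the off-diagonal $\mathcal{A}$-valued integral converges in $\mathcal{A}$, and that integration is order-preserving, so that a pointwise inequality $f\le g$ between positive $\mathcal{A}$-valued integrands passes to $\int f\le \int g$. Both are standard in the commutative setting---via the Gelfand representation, or the weak* integral in the $\sigma$-finite W*-case---and the continuous Bessel hypothesis together with measurability of $\Delta$ (both inherited from Definition \ref{12}) supplies integrability; I would cite these rather than reprove them. Everything beyond this is the algebra of rearranging a single inequality, so I expect no substantive obstacle.
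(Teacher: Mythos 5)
Your proposal is correct and takes essentially the same route as the paper: the proposition is presented there as an immediate consequence of the preceding continuous modular Welch bounds theorem (no separate proof is given), and your argument --- the first-order bound $\int_{G\times G}\langle \tau_g,\tau_h\rangle\langle \tau_h,\tau_g\rangle \,d(\mu_G\times\mu_G)\geq \mu_G(G)^2/d$ split across $\Delta$ and its complement for the lower estimate, plus the modular Cauchy--Schwarz inequality with unit inner products for the upper estimate --- is precisely that derivation, mirroring how the discrete analogue follows from the discrete modular Welch bound. The only caveats (order-preserving integration of $\mathcal{A}$-valued functions, nonnegativity of the constant under the square root, finiteness of $\mu_G(G)$) are implicit in the paper's own setup and are handled adequately by your closing remarks.
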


\begin{definition}
	Let  	$\{\tau_g\}_{g\in G}$ be a 	unit inner product  continuous Bessel  family for  $\mathcal{E}$. The \textbf{continuous frame potential} of  	$\{\tau_g\}_{g\in G}$ is defined as 
	\begin{align*}
		CFP(\{\tau_g\}_{g\in G})\coloneqq \int_{G}\int_{G}\langle \tau_g, \tau_h\rangle \langle \tau_h, \tau_g\rangle\, d \mu_G(g)\, d \mu_G(h).
	\end{align*}	
\end{definition}
\begin{proposition}
	Given a unit inner product  continuous modular Bessel  family	$\{\tau_g\}_{g\in G}$ for $\mathcal{E}$, one has 
	\begin{align*}
		\frac{\mu_G(G)^2}{d}\leq FP(\{\tau_g\}_{g\in G})\leq \mu_G(G)^2.
	\end{align*}	
	Further, if the diagonal $\Delta$ is measurable, then one also has 
	\begin{align*}
		(\mu_G\times\mu_G)(\Delta)\leq CFP(\{\tau_g\}_{g\in G})\leq \mu_G(G)^2.
	\end{align*}
\end{proposition}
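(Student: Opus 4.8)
The plan is to obtain both pairs of inequalities directly from the continuous modular Welch bound theorem established above together with the Cauchy--Schwarz inequality in Hilbert C*-modules. Throughout, every inequality is an inequality between positive elements of the commutative C*-algebra $\mathcal{A}$ (so $a\leq b$ means $b-a\geq 0$), and the quantity $CFP(\{\tau_g\}_{g\in G})$ (the same object written $FP$ in the first displayed line) is itself a positive element of $\mathcal{A}$: each integrand satisfies $\langle \tau_g,\tau_h\rangle\langle \tau_h,\tau_g\rangle=\langle \tau_g,\tau_h\rangle\langle \tau_g,\tau_h\rangle^*\geq 0$, and integration of positive integrands preserves positivity.

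For the lower bound $\frac{\mu_G(G)^2}{d}\leq CFP(\{\tau_g\}_{g\in G})$ I would simply invoke the $m=1$ ``in particular'' case of the continuous modular Welch bounds theorem proved earlier, since the middle quantity appearing there is, by definition, exactly $CFP(\{\tau_g\}_{g\in G})$. For the upper bound $CFP(\{\tau_g\}_{g\in G})\leq \mu_G(G)^2$, the key step is a pointwise estimate coming from Cauchy--Schwarz: for each fixed pair $(g,h)$ one has
\[
\langle \tau_g,\tau_h\rangle\langle \tau_h,\tau_g\rangle\leq \|\langle \tau_h,\tau_h\rangle\|\,\langle \tau_g,\tau_g\rangle=1,
\]
where both the norm factor and the final equality use the unit inner product hypothesis $\langle \tau_g,\tau_g\rangle=\langle \tau_h,\tau_h\rangle=1$. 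Integrating this C*-algebra inequality over $G\times G$ and using monotonicity of the integral yields $CFP(\{\tau_g\}_{g\in G})\leq \mu_G(G)^2\cdot 1$.

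For the second pair of inequalities, under the measurability of $\Delta$ I would split the double integral as $\int_{\Delta}+\int_{(G\times G)\setminus\Delta}$. On $\Delta$ we have $g=h$, so the integrand equals $\langle \tau_g,\tau_g\rangle\langle \tau_g,\tau_g\rangle=1$ and this piece contributes exactly $(\mu_G\times\mu_G)(\Delta)\cdot 1$; the off-diagonal integrand is a positive element, hence its integral is $\geq 0$. Summing the two pieces gives $CFP(\{\tau_g\}_{g\in G})\geq (\mu_G\times\mu_G)(\Delta)$, while the matching upper bound $CFP(\{\tau_g\}_{g\in G})\leq \mu_G(G)^2$ is already in hand from the previous paragraph.

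The one point requiring care is the order-theoretic behaviour of the $\mathcal{A}$-valued integral: namely that a pointwise inequality $f(g,h)\leq c\cdot 1$ between positive elements integrates to $\int f\leq c\,\mu_G(G)^2\cdot 1$, and that integrating a pointwise-positive integrand produces a positive element. These monotonicity and positivity properties follow from the integration theory for Hilbert C*-modules invoked earlier (the Kasparov/Paschke framework), so no genuinely new difficulty arises beyond checking that, when $\Delta$ is measurable, the diagonal and off-diagonal restrictions of the integrand are separately integrable.
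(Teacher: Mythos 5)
Your proof is correct, and it is essentially the derivation the paper intends: the paper states this proposition without proof, as one of a list of results said to follow from the continuous modular Welch bounds in the same way the discrete analogue (the estimate $n^2 \geq MFP(\{\tau_j\}_{j=1}^n) \geq n^2/d$) is said to follow from the discrete modular Welch bounds. Your three ingredients --- the $m=1$ integral form of the continuous modular Welch bound for the lower bound $\mu_G(G)^2/d \leq CFP$, the modular Cauchy--Schwarz inequality $\langle \tau_g,\tau_h\rangle\langle \tau_h,\tau_g\rangle \leq \|\langle \tau_h,\tau_h\rangle\|\,\langle \tau_g,\tau_g\rangle = 1$ integrated over $G\times G$ for the upper bound, and the decomposition $\int_{G\times G} = \int_{\Delta} + \int_{(G\times G)\setminus\Delta}$ for the second pair --- are exactly the natural way to fill in that omitted argument, and each step (positivity and monotonicity of the $\mathcal{A}$-valued integral included) is sound. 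One bookkeeping caveat: the continuous modular Welch bound theorem, as stated in the paper, carries the hypothesis that $\Delta$ is measurable, whereas the first pair of inequalities in the proposition does not assume this; the discrepancy is harmless, since measurability of $\Delta$ enters that theorem's proof only for the supremum (higher order) bounds and not for the integral bounds you invoke, but a literal citation of the theorem would require either noting this or adding the measurability hypothesis to the first claim as well.
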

\begin{theorem}
	If  	$\{\tau_g\}_{g\in G}$ is a unit inner product continuous modular Bessel  family for  $\mathcal{E}$, then 
	\begin{align*}
		CFP(	\{\tau_g\}_{g\in G})=	\text{Tra}(S_\tau^2)=	\text{Tra}((\theta_\tau^*\theta_\tau)^2).
	\end{align*}	
\end{theorem}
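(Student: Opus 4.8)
The plan is to mirror the discrete computation of Theorem \ref{TRACETHEOREM}, replacing the finite sums over $\{\tau_j\}_{j=1}^n$ by integrals over $G$, and to read off the second equality from the standard factorization of the frame homomorphism through the analysis homomorphism. First I would record the two homomorphisms attached to the family: the analysis homomorphism $\theta_\tau$, given by $(\theta_\tau x)(g)=\langle x,\tau_g\rangle$, into the Hilbert C*-module of square-integrable $\mathcal{A}$-valued fields over $G$, whose adjoint is the synthesis homomorphism $\theta_\tau^* f=\int_G f(g)\tau_g\,d\mu_G(g)$, and the frame homomorphism $S_\tau x=\int_G\langle x,\tau_g\rangle\tau_g\,d\mu_G(g)$. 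The Bessel condition in Definition \ref{CONTINUOUSMODULARWELCH} guarantees that these are bounded adjointable homomorphisms and that $S_\tau=\theta_\tau^*\theta_\tau$. Since $S_\tau^2=(\theta_\tau^*\theta_\tau)^2$ as operators on $\mathcal{E}$, and $\operatorname{Tra}$ is well defined on adjointable homomorphisms by Theorem \ref{TRACEFU}, the equality $\operatorname{Tra}(S_\tau^2)=\operatorname{Tra}((\theta_\tau^*\theta_\tau)^2)$ is immediate. Thus the entire content is the first equality $CFP(\{\tau_g\}_{g\in G})=\operatorname{Tra}(S_\tau^2)$.

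For that equality I would fix an orthonormal basis $\{\omega_j\}_{j=1}^d$ for $\mathcal{E}$ and use that $S_\tau=\theta_\tau^*\theta_\tau$ is positive, hence self-adjoint, to write $\operatorname{Tra}(S_\tau^2)=\sum_{j=1}^d\langle S_\tau^2\omega_j,\omega_j\rangle=\sum_{j=1}^d\langle S_\tau\omega_j,S_\tau\omega_j\rangle$. Expanding $S_\tau\omega_j=\int_G\langle\omega_j,\tau_g\rangle\tau_g\,d\mu_G(g)$ and pulling the $\mathcal{A}$-valued integrals out of each slot of the inner product (the inner product is linear in the first variable and $*$-linear in the second, and it commutes with the Bochner-type integral), and using $\langle\omega_j,\tau_h\rangle^*=\langle\tau_h,\omega_j\rangle$, turns each summand into the double integral $\int_G\int_G\langle\omega_j,\tau_g\rangle\langle\tau_g,\tau_h\rangle\langle\tau_h,\omega_j\rangle\,d\mu_G(g)\,d\mu_G(h)$.

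Next I would exploit commutativity of $\mathcal{A}$ to reorder the three scalar factors, and interchange the finite sum over $j$ with the integration, reducing $\operatorname{Tra}(S_\tau^2)$ to $\int_G\int_G\langle\tau_g,\tau_h\rangle\big(\sum_{j=1}^d\langle\tau_h,\omega_j\rangle\langle\omega_j,\tau_g\rangle\big)\,d\mu_G(g)\,d\mu_G(h)$. The bracketed inner sum is exactly the expansion of $\langle\tau_h,\tau_g\rangle$ in the orthonormal basis, obtained from the reconstruction formula $\tau_g=\sum_{j=1}^d\langle\tau_g,\omega_j\rangle\omega_j$ together with $\langle\tau_g,\omega_j\rangle^*=\langle\omega_j,\tau_g\rangle$, so it collapses to $\langle\tau_h,\tau_g\rangle$. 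What remains is precisely $\int_G\int_G\langle\tau_g,\tau_h\rangle\langle\tau_h,\tau_g\rangle\,d\mu_G(g)\,d\mu_G(h)=CFP(\{\tau_g\}_{g\in G})$, as claimed.

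The main obstacle is the analytic justification of the manipulations with the $\mathcal{A}$-valued integrals: I must verify that the inner product commutes with integration in both variables, that the resulting iterated integral may be handled as a genuine double integral (a Fubini step for Hilbert C*-module-valued maps), and that the finite sum over the basis can be interchanged with the integration. The continuity hypothesis (i) of Definition \ref{CONTINUOUSMODULARWELCH} together with the Bessel bound in (ii) supplies the integrability and measurability needed for these steps; everything else is the algebra of the commutative C*-algebra $\mathcal{A}$ and the orthonormal expansion, exactly as in the discrete Theorem \ref{TRACETHEOREM}.
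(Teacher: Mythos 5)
Your proposal is correct, and it is essentially the route the paper intends: the paper states this theorem with no proof at all, listing it among the results that transfer ``in a similar line'' from the discrete setting via Kasparov's integration theory, and your computation is precisely the continuous analogue of the paper's proof of Theorem \ref{TRACETHEOREM} (self-adjointness of $S_\tau$, expansion over an orthonormal basis of the rank-$d$ module, commutativity of $\mathcal{A}$, and the reconstruction formula), combined with the trivial observation that $S_\tau=\theta_\tau^*\theta_\tau$ makes the second equality immediate. The analytic points you flag --- adjointability of $\theta_\tau$, interchange of the $\mathcal{A}$-valued integrals with the inner product and with the finite sum over the basis --- are exactly the steps the paper itself delegates to Kasparov's integration theory without further justification, so your argument meets (and in its explicitness slightly exceeds) the paper's own standard of rigor.
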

\begin{definition}
	Let  	$\{\tau_g\}_{g\in G}$ be a 	unit inner product  continuous modular frame for $\mathcal{E}$.	We define the  \textbf{continuous frame correlation} of 	$\{\tau_g\}_{g\in G}$ as 
	\begin{align*}
		\mathcal{M}	\{\tau_g\}_{g\in G})\coloneqq \sup_{g,h \in G, g \neq h}\|\langle\tau_g, \tau_h \rangle\|.
	\end{align*}
\end{definition}
\begin{definition}
	A  continuous unit inner product modular frame 	$\{\tau_g\}_{g\in G}$ for   $\mathcal{E}$ is said to be a \textbf{continuous Grassmannian frame} for  $\mathcal{E}$ if 
	\begin{align*}
		\mathcal{M}(\{\tau_g\}_{g\in G})=\inf\left\{\mathcal{M}(\{\omega_g\}_{g\in G}):\{\omega_g\}_{g\in G}\text{ is a  continuous unit inner product modular frame for }\mathcal{E} \right\}.	
	\end{align*}		
\end{definition}
\begin{question}
	\textbf{Classify Lie groups  and (finite rank) Hilbert C*-modules so that continuous modular  Grassmannian frames exist}.
\end{question}

\begin{definition}
	A continuous modular frame 	$\{\tau_g\}_{g\in G}$ for  $\mathcal{E}$ is said to be 	\textbf{$\gamma$-equiangular} if there exists a positive element  $\gamma\geq0$ in the C*-algebra such that
	\begin{align*}
		\langle\tau_g, \tau_h \rangle\langle\tau_h, \tau_g \rangle=\gamma, \quad \forall 	g,h \in G, g \neq h.
	\end{align*}
\end{definition}

\begin{conjecture}
	(\textbf{Continuous modular Zauner's conjecture}) \textbf{For a given Lie group  $(G, \mu_G)$ and for  every $d\in \mathbb{N}$, there exists a $\gamma$-equiangular tight continuous modular frame 	$\{\tau_g\}_{g\in G}$  for $\mathcal{A}^d$ such that $\mu_G(G)=d^2$}.
\end{conjecture}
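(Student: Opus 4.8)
The plan is to build $\{\tau_g\}_{g\in G}$ as the orbit of a single \emph{fiducial} vector under a unitary action of $G$, mimicking the Weyl--Heisenberg construction that underlies the classical Zauner conjecture. Concretely, I would fix a representation $\pi\colon G\to\mathcal{U}(\mathcal{A}^d)$ into the group of adjointable unitary homomorphisms of $\mathcal{A}^d$, choose a fiducial $\tau_0$ with $\langle\tau_0,\tau_0\rangle=1$, and set $\tau_g\coloneqq\pi(g)\tau_0$. Two conditions must then be verified: tightness of the resulting continuous modular frame and $\gamma$-equiangularity with $\gamma=\tfrac{1}{d+1}$, the value forced by the equality case of the modular Welch bound (Theorem \ref{ARBITMODULE}) when $\mu_G(G)=d^2$, since $\tfrac{n-d}{d(n-1)}=\tfrac{1}{d+1}$ at $n=d^2$.

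Tightness I would obtain from a modular analogue of Schur's orthogonality relations. Because each $\pi(h)$ is unitary and $\mu_G$ is left-invariant, the frame homomorphism $S_\tau x=\int_G\langle x,\pi(g)\tau_0\rangle\,\pi(g)\tau_0\,d\mu_G(g)$ satisfies $\pi(h)S_\tau=S_\tau\pi(h)$ for every $h\in G$; if $\pi$ is irreducible and $\tau_0$ is admissible (so that the integral converges in the Hilbert C*-module sense of Definition \ref{CONTINUOUSMODULARWELCH}), the intertwining property should force $S_\tau$ to be a scalar multiple of $\mathrm{Id}$, i.e.\ the frame is tight. Establishing the correct admissibility/square-integrability theory for representations on $\mathcal{A}^d$, a C*-module version of the coherent-state resolution of the identity, is the first technical hurdle.

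For equiangularity, unitarity gives $\langle\tau_g,\tau_h\rangle\langle\tau_h,\tau_g\rangle=\langle\pi(h^{-1}g)\tau_0,\tau_0\rangle\langle\tau_0,\pi(h^{-1}g)\tau_0\rangle$, so the off-diagonal product depends only on $k=h^{-1}g$. The frame is $\gamma$-equiangular precisely when the modular autocorrelation $\langle\pi(k)\tau_0,\tau_0\rangle\langle\tau_0,\pi(k)\tau_0\rangle$ equals the constant $\gamma$ for every $k\neq e$, which is exactly the defining property of a modular fiducial. Over a commutative $\mathcal{A}=C(X)$ I would attack this fiberwise: by Gelfand duality a section $\tau_0\in C(X,\mathbb{C}^d)$ with this property is a continuous field of fiducials, so a constant section built from a classical SIC-POVM in $\mathbb{C}^d$ is a candidate and reduces the problem, on each fiber, to the classical Zauner conjecture.

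The hard part is twofold, and it is where I expect the argument to stall. First, producing a fiducial with constant autocorrelation is, even over $\mathbb{C}$ and finite $G$, equivalent to the existence of SIC-POVMs, which is open in general, so no full proof can be expected without resolving that problem. Second, and this is the structural warning the statement conceals, if $G$ is a connected positive-dimensional Lie group and $g\mapsto\tau_g$ is continuous, then letting $g\to h$ with $g\neq h$ forces $\gamma=\langle\tau_h,\tau_h\rangle^2=1$, whereupon the Cauchy--Schwarz equality case collapses all vectors to scalar multiples of one another. Thus the substantive content lives in the discrete regime, where $\mu_G(G)=d^2$ with counting measure means $|G|=d^2$, and the natural candidate is $G=\mathbb{Z}_d\times\mathbb{Z}_d$ acting by the discrete Weyl--Heisenberg representation; the conjecture then becomes a modular Zauner/SIC statement, which I would establish unconditionally only for those pairs $(d,\mathcal{A})$ for which a classical fiducial is known and can be lifted to a constant section.
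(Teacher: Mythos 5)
The statement you were given is one of the paper's \emph{conjectures}: the paper offers no proof of it (it is posed as an open problem, in deliberate parallel with the classical Zauner conjecture), so there is no proof to compare your attempt against, and no blind attempt could legitimately close it. Your proposal, to its credit, recognizes this. What you give is a strategy plus a conditional reduction: build the family as an orbit $\tau_g=\pi(g)\tau_0$ of a fiducial under a unitary representation, obtain tightness from an intertwining (Schur-type) argument, and reduce $\gamma$-equiangularity over a commutative $\mathcal{A}\cong C(X)$ to the classical Zauner problem fiberwise, with constant sections lifting known SIC-POVMs. This is coherent, and the constant-section lift does produce honest examples for $\mathcal{E}=\mathcal{A}^d$ in those dimensions $d$ where a classical fiducial is known (your computation $\frac{n-d}{d(n-1)}=\frac{1}{d+1}$ at $n=d^2$ is correct, and both tightness and equiangularity pass pointwise to constant sections). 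But as a proof it has an irreducible gap, which you name yourself: already in the base case $\mathcal{A}=\mathbb{C}$ with $G$ finite of order $d^2$, the statement is equivalent to SIC-POVM existence, which is open. A reduction of an open conjecture to another open conjecture establishes special cases, nothing more.

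The more important point --- and you should state it as a result rather than a ``warning'' --- is that your degeneracy observation shows the conjecture, read literally for a connected Lie group of positive dimension, is false for $d\geq 2$. The definition of a continuous modular frame (Definition \ref{CONTINUOUSMODULARWELCH}) makes $g\mapsto\langle\tau_h,\tau_g\rangle$ continuous for each fixed $h$; since $h$ is not an isolated point, letting $g\to h$ with $g\neq h$ in the equiangularity identity $\langle\tau_g,\tau_h\rangle\langle\tau_h,\tau_g\rangle=\gamma$ forces $\gamma=\langle\tau_h,\tau_h\rangle^{2}=1$. Then $\|\langle\tau_g,\tau_h\rangle\|=1$ is the equality case of Cauchy--Schwarz for unit inner product vectors, and for $\mathcal{A}^d\cong C(X)^d$ this makes all the $\tau_g$ fiberwise proportional, so the family generates a fiberwise rank-one submodule and cannot satisfy the lower frame bound when $d\geq 2$. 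Hence the only non-vacuous reading of the statement is the discrete one ($G$ finite, $|G|=d^2$ with counting measure), where it collapses to the paper's discrete modular Zauner conjecture. That is a genuine finding about the formulation of the conjecture, but it is a refutation-cum-reformulation rather than a proof; the conjecture itself remains exactly as open after your argument as before it.
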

\begin{conjecture}
	(\textbf{Continuous modular Zauner's conjecture - strong form}) \textbf{Let $\mathcal{A}$ be a unital C*-algebra with invariant basis number property or a W*-algebra. For a given Lie group  $(G, \mu_G)$ and for  every $d\in \mathbb{N}$, there exists a $\gamma$-equiangular tight continuous modular frame 	$\{\tau_g\}_{g\in G}$  for $\mathcal{A}^d$ such that $\mu_G(G)=d^2$}.
\end{conjecture}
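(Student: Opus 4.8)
The plan is to realise the frame as the orbit of a single \emph{fiducial} vector under a group action, so that tightness and equiangularity each collapse to one algebraic condition on the fiducial. Concretely, I would fix a projective module-unitary representation $\pi\colon G\to\mathcal{U}(\mathcal{A}^d)$ with central $2$-cocycle $c$ and $\pi(e)=\mathrm{Id}$, choose a fiducial $\tau_0\in\mathcal{A}^d$ with $\langle\tau_0,\tau_0\rangle=1$, and set $\tau_g\coloneqq\pi(g)\tau_0$. Continuity of $g\mapsto\langle x,\tau_g\rangle$ is then immediate from continuity of $\pi$, so the remaining content is the frame inequality and the equiangularity.

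First I would dispose of tightness. Writing $\Theta\coloneqq\langle\,\cdot\,,\tau_0\rangle\tau_0$, the frame homomorphism is $S_\tau=\int_G\pi(g)\Theta\pi(g)^{*}\,d\mu_G(g)$. The inner automorphisms $\mathrm{Ad}_{\pi(g)}$ compose genuinely (not merely projectively), since the scalar cocycle $c(g,h)$ is central and conjugates trivially; hence by left-invariance of the Haar measure $S_\tau$ is fixed by every $\mathrm{Ad}_{\pi(h)}$, i.e.\ it commutes with the whole representation. Assuming $\pi$ is irreducible in the sense that its commutant is the centre of $\mathcal{A}$ acting as central multiples of $\mathrm{Id}$, the modular spectral theorem (Theorem \ref{SPECTRALTHEOREM}) forces $S_\tau$ to be such a central scalar, and the trace computation of Theorem \ref{TRACETHEOREM} (integrated) gives $\operatorname{Tra}(S_\tau)=\mu_G(G)$, so $S_\tau=\frac{\mu_G(G)}{d}\,\mathrm{Id}=d\,\mathrm{Id}$ once $\mu_G(G)=d^2$. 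Thus the orbit is automatically a tight frame.

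Next I would reduce equiangularity. Using $\langle\tau_g,\tau_h\rangle=\langle\tau_0,\pi(g)^{*}\pi(h)\tau_0\rangle$ and the cocycle identities, one gets $\langle\tau_g,\tau_h\rangle=\omega(g,h)\,\langle\tau_0,\pi(g^{-1}h)\tau_0\rangle$ with $\omega(g,h)$ a central unit, so the cocycle cancels in the product and $\langle\tau_g,\tau_h\rangle\langle\tau_h,\tau_g\rangle=\langle\tau_0,\pi(k)\tau_0\rangle\langle\pi(k)\tau_0,\tau_0\rangle$ with $k=g^{-1}h$. Hence $\gamma$-equiangularity is equivalent to the single requirement that this quantity equal one fixed positive $\gamma\in\mathcal{A}$ for every $k\neq e$. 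Comparing with the tight-frame bound through the frame-potential estimate (Proposition \ref{FPESTIMATE}) and the equality case of Theorem \ref{CNG2} pins the value to $\gamma=\frac{1}{d+1}$; so any fiducial producing a constant overlap necessarily produces the Zauner value.

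The hard part, as expected, is the existence of $\tau_0$, and the analysis above already exposes a structural obstruction. The map $k\mapsto\|\langle\tau_0,\pi(k)\tau_0\rangle\|^{2}$ is continuous and equals $1$ at $k=e$, so it cannot jump to $\gamma<1$ across a connected neighbourhood of the identity; for $d\geq2$ this forces $G$ to be totally disconnected, i.e.\ discrete, whence $\mu_G(G)=d^2$ with counting measure means $\abs{G}=d^2$, the canonical choice being the Weyl--Heisenberg group over $\mathbb{Z}_d$ indexed by $\mathbb{Z}_d\times\mathbb{Z}_d$. At that point the existence of $\tau_0$ is exactly the existence of a modular symmetric informationally complete system, and over $\mathcal{A}=\mathbb{C}$ it is the classical Zauner conjecture, which is open. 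I would therefore first attempt the commutative case by the Gelfand picture: realise $\mathcal{A}\cong C(X)$, view $\mathcal{A}^d$ as sections of a trivial $\mathbb{C}^d$-bundle, and transport a \emph{constant} classical fiducial fibrewise, so that the commutative strong form follows whenever a classical fiducial exists in dimension $d$. The genuinely non-commutative case (invariant basis number or W*-algebra, where no fibrewise reduction is available) together with the open classical problem is the true obstacle, and I do not expect to remove it by these elementary means.
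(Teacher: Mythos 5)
This statement is a \emph{conjecture} in the paper: no proof of it is offered (nor could one reasonably be, since taking $\mathcal{A}=\mathbb{C}$ and $G$ a finite group of order $d^2$ already contains essentially the classical Zauner/SIC existence problem, which is open for general $d$), so there is no argument of the paper to compare yours against; and your proposal, as you yourself concede, is not a proof either but a reduction strategy that terminates exactly at the open classical problem. That said, your analysis is sound where it is concrete, and one point deserves emphasis because it cuts against the conjecture as stated rather than toward it. By the paper's own definition of a continuous modular frame, $g\mapsto\langle x,\tau_g\rangle$ is continuous for every $x$; hence for fixed $h$ the map $g\mapsto\langle\tau_g,\tau_h\rangle\langle\tau_h,\tau_g\rangle$ is continuous and tends to $\langle\tau_h,\tau_h\rangle^2$ as $g\to h$. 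For unit inner product vectors (the intended reading, by analogy with the discrete modular Zauner conjecture) this limit is $1$, so on any Lie group of positive dimension, where $h$ is not isolated, equiangularity forces $\gamma=1$ and rules out the Zauner value $\frac{1}{d+1}$; the conjecture can therefore have content only for $0$-dimensional, i.e.\ discrete, groups, and finiteness of $\mu_G(G)=d^2$ then confines it essentially to finite groups --- your Weyl--Heisenberg reduction. This is a genuine observation about the formulation that the paper does not make.

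Two caveats about the parts of your sketch you treat as settled. First, your tightness argument imports an irreducibility hypothesis (the commutant of $\pi$ consists of central multiples of the identity) that is not in the statement and is not automatic for module representations; moreover you invoke Theorems \ref{SPECTRALTHEOREM} and \ref{TRACETHEOREM}, which the paper establishes only for \emph{commutative} $\mathcal{A}$, whereas the conjecture explicitly allows noncommutative $\mathcal{A}$ (any unital C*-algebra with invariant basis number property, or any W*-algebra), so neither result may be cited in that generality. Second, pinning $\gamma=\frac{1}{d+1}$ via Proposition \ref{FPESTIMATE} and the equality case of Theorem \ref{CNG2} uses discrete statements with $n=d^2$ vectors; this is legitimate only \emph{after} the reduction to finite $G$, not as a constraint on the genuinely continuous case. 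None of this damages your reduction once the hypotheses are made explicit, but the endpoint stands: the existence of the fiducial $\tau_0$ is a modular SIC existence problem containing the open classical conjecture, and your fibrewise Gelfand construction for $\mathcal{A}\cong C(X)$ is the only unconditional progress --- itself conditional on a classical fiducial existing in dimension $d$. In short, there is a gap, but it is the gap the paper itself acknowledges by calling the statement a conjecture.
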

\begin{theorem}
	Let  	$\{\tau_g\}_{g\in G}$ be a unit inner product modular  continuous frame for  $\mathcal{E}$. Then 
	\begin{align}\label{1}
		\mathcal{M}(	\{\tau_g\}_{g\in G})\geq \left(\frac{1}{(\mu_G\times\mu_G)((G\times G)\setminus\Delta)}\left[\frac{\mu_G(G)^2}{d}-(\mu_G\times\mu_G)(\Delta)\right]\right)^\frac{1}{2}\eqqcolon\gamma.
	\end{align}
	If the frame is $\gamma$-equiangular, then we have equality in Inequality (\ref{1}).
\end{theorem}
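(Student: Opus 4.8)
The inequality in (\ref{1}) is, after a square root, nothing other than the first order continuous modular Welch bound already established above; so the plan is to reduce to that bound for the first part, and then settle the equiangular equality case by a direct computation with the C*-identity. First I would record the elementary but crucial observation that, since $t\mapsto t^2$ is monotone on $[0,\infty)$ and the module norm is nonnegative,
\begin{align*}
\mathcal{M}(\{\tau_g\}_{g\in G})^2=\left(\sup_{g,h\in G,\,g\neq h}\|\langle\tau_g,\tau_h\rangle\|\right)^2=\sup_{g,h\in G,\,g\neq h}\|\langle\tau_g,\tau_h\rangle\|^2 .
\end{align*}
A continuous modular frame is in particular a continuous modular Bessel family, and the diagonal $\Delta$ is measurable (implicitly, since the asserted bound refers to $(\mu_G\times\mu_G)(\Delta)$), so the first order continuous modular Welch bound applies and shows the right hand side is at least
\begin{align*}
\frac{1}{(\mu_G\times\mu_G)((G\times G)\setminus\Delta)}\left[\frac{\mu_G(G)^2}{d}-(\mu_G\times\mu_G)(\Delta)\right]=\gamma^2 .
\end{align*}
Taking square roots gives $\mathcal{M}(\{\tau_g\}_{g\in G})\geq\gamma$, which is exactly (\ref{1}).

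For the equality statement, suppose the frame is $\gamma$-equiangular, so that off the diagonal the positive element $\langle\tau_g,\tau_h\rangle\langle\tau_h,\tau_g\rangle$ is a fixed constant; for the equality conclusion this constant is the scalar $\gamma^2\cdot 1$, the unique value saturating the continuous frame-potential bound $CFP(\{\tau_g\}_{g\in G})\geq\mu_G(G)^2/d$. Indeed, splitting the double integral along the measurable diagonal and using $\langle\tau_g,\tau_g\rangle=1$,
\begin{align*}
\int_{G}\int_{G}\langle\tau_g,\tau_h\rangle\langle\tau_h,\tau_g\rangle\,d\mu_G(g)\,d\mu_G(h)=(\mu_G\times\mu_G)(\Delta)\cdot 1+(\mu_G\times\mu_G)((G\times G)\setminus\Delta)\,\gamma^2\cdot 1 ,
\end{align*}
which equals $\tfrac{\mu_G(G)^2}{d}\cdot 1$ precisely for this value of $\gamma^2$, confirming consistency. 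Then, using $\langle\tau_h,\tau_g\rangle=\langle\tau_g,\tau_h\rangle^*$ together with the C*-identity,
\begin{align*}
\mathcal{M}(\{\tau_g\}_{g\in G})^2=\sup_{g,h\in G,\,g\neq h}\|\langle\tau_g,\tau_h\rangle\langle\tau_g,\tau_h\rangle^*\|=\sup_{g,h\in G,\,g\neq h}\|\langle\tau_g,\tau_h\rangle\langle\tau_h,\tau_g\rangle\|=\|\gamma^2\cdot 1\|=\gamma^2 ,
\end{align*}
so $\mathcal{M}(\{\tau_g\}_{g\in G})=\gamma$ and (\ref{1}) holds with equality.

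The genuinely delicate points, and hence where I expect the main obstacle, are measure-theoretic rather than algebraic: one must ensure the diagonal $\Delta$ is treated so that its contribution is exactly $(\mu_G\times\mu_G)(\Delta)\cdot 1$, and that the passage from the C*-operator inequality $\int_{(G\times G)\setminus\Delta}\langle\tau_g,\tau_h\rangle\langle\tau_h,\tau_g\rangle\geq(\cdots)\cdot 1$ to the corresponding norm inequality is valid (for a positive element $A\geq c\cdot 1$ with scalar $c\geq 0$ one has $\|A\|\geq c$). Both of these are already internal to the continuous Welch bound I invoke, so at the level of this proof they cost nothing. The only remaining subtlety is notational, namely the identification of the equiangularity constant element with $\gamma^2\cdot 1$, where $\gamma$ denotes the numerical bound of the theorem.
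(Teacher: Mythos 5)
Your proof is correct and follows the route the paper itself intends: the paper states this theorem without proof (it is listed among the results transported from the finite-rank setting), and the inequality is exactly the first order continuous modular Welch bound applied to the frame regarded as a continuous modular Bessel family with measurable diagonal, followed by a square root, while equality in the equiangular case is the one-line C*-identity computation $\sup_{g\neq h}\|\langle\tau_g,\tau_h\rangle\|^2=\sup_{g\neq h}\|\langle\tau_g,\tau_h\rangle\langle\tau_h,\tau_g\rangle\|$. Your notational repair is moreover necessary, not cosmetic: under the paper's literal definition of $\gamma$-equiangularity the off-diagonal products equal the element $\gamma\cdot 1$, which would give $\mathcal{M}(\{\tau_g\}_{g\in G})=\sqrt{\gamma}$ rather than $\gamma$, so the equality claim is true only when the hypothesis is read, as you read it, with constant $\gamma^2\cdot 1$ (the same $\gamma$ versus $\gamma^2$ mismatch occurs in the finite analogue, Theorem~\ref{CNG2}). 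The frame-potential consistency check in your argument is harmless but dispensable; the hypothesis together with $\langle\tau_g,\tau_h\rangle^*=\langle\tau_h,\tau_g\rangle$ and the C*-identity already yields the equality.
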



 \bibliographystyle{plain}
 \bibliography{reference.bib}

\end{document}